\newtheorem*{rep@theorem}{\rep@title}
\newcommand{\newreptheorem}[2]{%
\newenvironment{rep#1}[1]{%
 \def\rep@title{#2 \ref{##1}}%
 \begin{rep@theorem}}%
 {\end{rep@theorem}}}
\definecolor{RedOrange}{cmyk}{ 0, 0.77, 0.87, 0}
\definecolor{RoyalPurple}{cmyk}{ 0.84, 0.53, 0, 0}
\definecolor{YellowGreen}{cmyk}{ 0.44, 0, 0.74, 0}
\definecolor{Fuchsia}{cmyk}{ 0.47, 0.91, 0, 0.08}
\definecolor{Blue}{cmyk}{ 0.84, 0.53, 0, 0}
\definecolor{BlueViolet}{cmyk}{ 0.84, 0.53, 0, 0}
\definecolor{Black}{cmyk}{ 0.75, 0.68, 0.67, 0.9}
\newcommand{\R}{\mathbb{R}}
\newcommand{\B}{\mathbb{B}}
\newcommand{\N}{\mathbb{N}}
\newcommand{\e}{\varepsilon}
\newcommand{\E}{\mathbb{E}}
\newcommand{\Z}{\mathbb{Z}}
\renewcommand{\P}{\mathbb{P}}
\newcommand{\rmL}{\mathrm{L}}
\newcommand{\lin}{\left[\kern-0.15em\left[}
\newcommand{\rin} {\right]\kern-0.15em\right]}
\newcommand{\linf}{[\kern-0.15em [}
\newcommand{\rinf} {]\kern-0.15em ]}
\newcommand{\ilin}{\left]\kern-0.15em\left]}
\newcommand{\irin} {\right[\kern-0.15em\right[}
\newcommand{\secno}[1]{\thesection.\arabic{#1}}
\renewcommand{\tilde}{\widetilde}
\newtheorem{lem}{Lemma}[section]
\newtheorem{remark}[lem]{Remark}
\newtheorem{prop}[lem]{Proposition}
\newtheorem{thm}[lem]{Theorem}
\newtheorem{cor}[lem]{Corollary}
\newtheorem {Def}[lem] {Definition}
\definecolor{lilas}{RGB}{182, 102, 210}
\numberwithin{equation}{section}
\title[Divergence of non-random fluctuation in FPP]
{Divergence of non-random fluctuation in First-passage percolation}
\date{\today}
\author{Shuta Nakajima} 
\address[Shuta Nakajima]
{Graduate School of Mathematics, University Nagoya.}
\email{njima@math.nagoya-u.ac.jp}
\keywords{First-passage percolation, non-random fluctuation.}
\subjclass[2010]{Primary 60K37; secondary 60K35; 82A51; 82D30}
\begin{document}
\maketitle

\begin{abstract}
We study the non-random fluctuation in first passage percolation and show that it diverges. We also prove the divergence of non-random shape fluctuation, which was predicted in [Yu Zhang. The divergence of fluctuations for shape in first passage percolation. {\em Probab. Theory. Related. Fields.} 136(2) 298--320, 2006].
\end{abstract}

\section{Introduction}
First-passage percolation is a dynamical model of infection, which was introduced by Hammersley and Welsh \cite{HW65}. The model has received much interests both in mathematics and physics because it has rich structures from the viewpoint of the random metric and it is related to the KPZ-theory \cite{KS91}. See \cite{ADH} on the backgrounds and related topics.\\

We consider the First-passage percolation (FPP) on the lattice $\Z^d$ with $d\geq{}2$. The model is defined as follows. The vertices are the elements of $\Z^d$. Let us denote by $\mathbb{E}^d$  the set of edges:
$$\mathbb{E}^d=\{\{ v,w \}|~v,w\in\Z^d,~|v-w|_1=1\},$$
where we set $|v-w|_1=\sum^d_{i=1}|v_i-w_i|$ for $v=(v_1,\cdots,v_d)$, $w=(w_1,\cdots,w_d)$. Note that we consider non-oriented edges in this paper, i.e., $\{ v,w \}=\{ w,v \}$ and we sometimes regard $\{ v,w \}$ as a subset of $\Z^d$ with a slight abuse of notation. We assign a non-negative random variable $\tau_e$ to each edge $e\in \mathbb{E}^d$, called the passage time of the edge $e$. The collection $\tau=\{\tau_e\}_{e\in \mathbb{E}^d}$ is assumed to be independent and identically distributed with common distribution $F$. \\
    
  A path $\gamma$ is a finite sequence of vertices $(x_1,\cdots,x_l)\subset\Z^d$ such that for any $i\in\{1,\cdots,l-1\}$, $\{x_i,x_{i+1}\}\in \mathbb{E}^d$. Given an edge $e\in \mathbb{E}^d$, we write $e\in \gamma$ if there exists $i\in \{1\cdots,l-1\}$ such that $e=\{x_{i},x_{i+1}\}$.  Given a path $\gamma$, we define the passage time of $\gamma$ as
$${\rm T}(\gamma)=\sum_{e\in\gamma}\tau_e.$$
For $x\in\R^d$, we set $[x]=([x_1],\cdots,[x_d])$  where $[a]$ is the greatest integer less than or equal to $a\in\R$. Given two vertices $v,w\in\R^d$, we define the {\em first passage time} between $v$ and $w$ as
$${\rm T}(v,w)=\inf_{\gamma:[v]\to [w]}{\rm T}(\gamma),$$
where the infimum is taken over all finite paths $\gamma$ starting at $[v]$ and ending at $[w]$. A path $\gamma$ from $v$ to $w$ is said to be {\em optimal} if it attains the first passage time, i.e., ${\rm T}(\gamma)={\rm T}(v,w)$. We define $G(t)=\{x\in\R^d|~\E {\rm T}(0,x)\leq t\}$.\\

By Kingman's subadditive ergodic theorem \cite{King68}, if $\E \tau_e<\infty$, then for any $x\in\R^d$, there exists a non-random constant ${\rm g}(x)\ge 0$ such that

\begin{equation}\label{kingman}
  {\rm g}(x)=\lim_{t\to\infty}t^{-1} {\rm T}(0,t x)=\lim_{t\to\infty}t^{-1} \E[{\rm T}(0,t x)]\hspace{4mm}a.s.
\end{equation}
This ${\rm g}(x)$ is called the {\em time constant}. Note that, by subadditivity, if $x\in\Z^d$, then ${\rm g}(x)\le \E {\rm T}(0,x)$ and moreover for any $x\in\R^d$, ${\rm g}(x)\le \E {\rm T}(0,x)+2d\E \tau_e$. It is easy to check the homogeneity and the convexity: ${\rm g}(\lambda x)=\lambda {\rm g}(x)$ and ${\rm g}(r x+(1-r)y)\le r {\rm g}(x)+(1-r){\rm g}(y)$ for $\lambda\in\R$, $r\in[0,1]$ and $x,y\in\R^d$. It is well-known that if $F(0)<p_c(d)$, then ${\rm g}(x)>0$ for any $x\neq 0$, see, e.g., \cite{Kes86}. Therefore, if $F(0)<p_c(d)$, then $g:\R^d\to \R_{\geq 0}$ is a norm. We use ${\rm g}(x)\leq 2d\E \tau_e|x|$ and $\E {\rm T}(0,x)\leq 2d\E \tau_e|x|$ for $x\in\R^d$ with $|x|\ge 1$ many times in the proof without any comments.\\
  
\subsection{Backgrounds and related topics}
   Hammersley and Welsh \cite{HW65} have proved that $\frac{1}{N}{\rm T}(0,N\mathbf{e}_1)$ converges ${\rm g}(\mathbf{e}_1)$ in probability when $d=2$. This statement was strengthened by Kingman \cite{King68} as stated in \eqref{kingman}. Since then, the rate of this convergence becomes one of the most important problems in this model. The difference ${\rm T}(0,x)-{\rm g}(x)$ can be naturally divided into the {\em random fluctuation} part and the {\em non-random fluctuation} part as follows:
  $${\rm T}(0,x)-{\rm g}(x)=\underbrace {{\rm T}(0,x)-\E {\rm T}(0,x)}_{\text{random}}+ \underbrace{\E {\rm T}(0,x)-{\rm g}(x)}_{\text{non-random}}.$$
  
   Let us briefly review the earlier work. It is widely believed that there exist universal constants $\chi(d),\chi'(d) \ge 0$ such that, as $|x|\to \infty$,
  \begin{equation}\label{exponent}
    {\rm T}(0,x)-\E {\rm T}(0,x)\sim \sqrt{\text{Var}({\rm T}(0,x))}\sim |x|^{\chi(d)}\text{ and }\E {\rm T}(0,x)-{\rm g}(x)\sim |x|^{\chi'(d)}
    \end{equation}in a suitable sense. The term ``universal'' means that these values are independent of distribution of $\tau$. To state the previous work precisely, we introduce four relevant quantities:
  $$\bar{\chi}(d)=\lim_{t\to\infty}\sup_{|x|\geq t}\frac{\log{\text{Var}({\rm T}(0,x))}}{2\log{|x|}},~\underline{\chi}(d)=\lim_{t\to\infty}\inf_{|x|\geq t}\frac{\log{\text{Var}({\rm T}(0,tx))}}{2\log{t}},$$
  \begin{equation}\label{NF-exponent}
    \bar{\chi}'(d)=\lim_{t\to\infty}\sup_{|x|\geq t}\frac{\log{|\E {\rm T}(0,x)-{\rm g}([x])|}}{\log{|x|}},~\underline{\chi}'(d)=\lim_{t\to\infty}\inf_{|x|\geq t}\frac{\log{|\E {\rm T}(0,x)-{\rm g}([x])|}}{\log{|x|}}.
    \end{equation}
  Due to the work of Kesten~\cite{Kes93}, it is (the best currently) known that $0\leq \underline{\chi}(d)\leq \bar{\chi}(d)\leq 1/2$  under the condition that the second moment of $\tau$ is finite. On the other hand, Newman and Piza showed that $\bar{\chi}(2)\geq 1/8$ for a useful distributions under an exponential moment condition \cite{NP95}, where useful distributions are defined in \eqref{Def:useful} below.\\

  Let us move on to the previous researches on the non-random fluctuation. Alexander \cite{Alex97} found the relationship between $\bar{\chi}(d)$ and $\bar{\chi}'(d)$ and he proved $\bar{\chi}'(d)\leq 1/2$ with an exponential moment condition, which was later relaxed to a low moment condition  in \cite{DK16}. For the lower bounds, it is proved that $ \underline{\chi}'(d)\ge -1$ in \cite{Kes93} and $\bar{\chi}'(d)\ge -1/2$ in \cite{ADH15} with an exponential moment condition.\\
  
  remarkarkably, it was shown in \cite{ADH15} that $\chi(d)$ and $\chi'(d)$ in \eqref{exponent} are actually the same under an assumption of the existence of $\chi(d)$ in a suitable sense. In fact, it is expected that they have the exactly same growth \cite{DW16,Joha00}. As a consequence, the above four quantities should be all the same, which are called the {\em fluctuation exponent} collectively. From the KPZ-theory, it is conjectured that $\chi(2)(=\chi'(2))=1/3$. However for higher dimensions, the values are unknown. Some physicists predicted that in sufficiently high dimensions, $\chi(d)=0$ \cite{CD90,HH89,NR88}. If it is correct, the further problem can be conceivable  whether the random fluctuation and the non-random fluctuation diverge or not. In this paper, we prove that the latter diverges for any dimension $d\ge 2$, which is the first result around related models. Accordingly, we believe that the former diverges too.\\

\subsection{Main results}
   We restrict our attention to the following class of distributions. A distribution $F$ is said to be {\em useful} if  
   \begin{equation}\label{Def:useful}
     \P(\tau_e=F^-)<
    \begin{cases}
    p_c(d) & \text{if $F^-=0$} \\
    \vec{p}_c(d)& \text{otherwise},
    \end{cases}
    \end{equation}
   where $p_c(d)$ and $\vec{p}_c(d)$ stand for the critical probabilities for $d$-dimensional percolation and oriented percolation model, respectively and $F^-$ is the infimum of the support of $F$. Note that if $F$ is continuous, i.e., $\P(\tau_e=a)=0$ for any $a\in \R$, then $F$ is useful.\\

   Let us define an euclidean ball ${\rm B}(x,r)$ for $x\in\R^d$ and $r>0$ as
   $${\rm B}(x,r)=\{y\in\R^d|~d(x,y)\le r\}.$$
   We write a unit ball $\B_d$ with respect to the norm $g$ as
   $$\B_d=\{x\in\R^d|~{\rm g}(x)\leq 1\}.$$

   \begin{Def}
   A point $\mathbf{x}_d\in \partial \B_d$ is said to be {\it directional flat} if there exist $x_1\in\R^d$ and $r>0$ such that ${\rm B}(x_1,r)\subset \B_d$ and $\mathbf{x}_d\in \partial {\rm B}(x_1,r)$.
   \end{Def}
   \begin{thm}\label{thm-main2}
   
     Suppose that $F$ is useful and $\E [\tau_e^2 (\log{\tau_e})_+] <\infty$. Let $\mathbf{x}_d\in \partial \B_d$ be a directional flat point.  Then there exist a sequence $(x_n)_{n\in \N}\subset \Z^d$ and $c>0$ such that $|x_n|_1=n$, $x_n/|x_n|\to\mathbf{x}_d/ |\mathbf{x}_d|$ and for any sufficiently large $n\in\N$ 
     \begin{equation}
       |\E {\rm T}(0,x_n) -{\rm g}(x_n)|\geq c(\log{\log{n}})^{1/d}.
       \end{equation}
     In particular, by Jensen's inequality,     
     \begin{equation}
       \lim_{n\to\infty} \E |{\rm T}(0,x_n) -{\rm g}(x_n)|=\infty.\label{fluc-time}
       \end{equation}

   \end{thm}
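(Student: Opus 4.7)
The plan is to exhibit the lower bound via a multi-scale scheme that accumulates many sources of non-random excess along the spherical cap of flatness at $\mathbf{x}_d$. By hypothesis there is a Euclidean ball $\rmB(x_1,r)\subset\B_d$ tangent to $\partial\B_d$ at $\mathbf{x}_d$. For each scale $t>0$, I would work with a family $A_t\subset\Z^d$ of lattice probes lying near the Euclidean sphere $t\,\partial\rmB(x_1,r)$, each satisfying $|y|_1=t+O(1)$, $y/|y|_1$ close to $\mathbf{x}_d/|\mathbf{x}_d|$, and ${\rm g}(y)=t+O(1)$ uniformly by flatness; the candidate $x_n$ for the theorem will eventually be selected inside $A_n$.

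Under the useful hypothesis and the moment assumption $\E[\tau_e^2(\log\tau_e)_+]<\infty$, a Benjamini--Kalai--Schramm-type variance bound yields $\mathrm{Var}(\T(0,y))\lesssim t/\log t$, while a Kesten-type large-deviation estimate provides a matching lower tail. Combining these with an order-statistics anti-concentration inequality over a greedily thinned subfamily of probes in $A_t$ should yield a universal constant $c_1>0$ with
\be{\E\min_{y\in A_t}\T(0,y)\le\min_{y\in A_t}\E\T(0,y)-c_1,}
while the shape-theorem concentration gives $\E\min_{y\in A_t}\T(0,y)\ge t-o(1)$. Together these force $\min_{y\in A_t}\E\T(0,y)\ge t+c_1-o(1)$, producing a universal-constant non-random excess $\E\T(0,y^*)-{\rm g}(y^*)\ge c_1-O(1)$ at some probe $y^*\in A_t$.

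I would then iterate the single-scale gain over roughly $\log\log n$ dyadic scales $t_k=2^k$, concatenating shortest paths between consecutive spherical shells $t_k\,\partial\rmB(x_1,r)$ to propagate the constant excess additively into the non-random fluctuation at the final scale. Accounting for the $(d-1)$-dimensional cap geometry --- which controls how many well-separated probes survive the thinning at each scale and thereby modulates the accumulation --- yields the announced $(\log\log n)^{1/d}$ bound. Choosing $x_n\in A_n$ as a maximizer of $\E\T(0,\cdot)-{\rm g}(\cdot)$ then finishes the non-random fluctuation statement, and \eqref{fluc-time} follows immediately from Jensen's inequality applied to $|\T(0,x_n)-{\rm g}(x_n)|\ge|\E\T(0,x_n)-{\rm g}(x_n)|$.

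The technical crux is the single-scale anti-concentration estimate, since nearby probes $y,y'\in A_t$ have passage times $\T(0,y)$ and $\T(0,y')$ that are strongly positively correlated through shared initial geodesic segments (optimal paths from $0$ to the two endpoints tend to coincide over most of their length). Extracting a universal $c_1$-gain therefore demands a careful greedy thinning together with a sharp tail bound on geodesic overlaps, in the spirit of the Benjamini--Kalai--Schramm noise-sensitivity argument adapted to the directional flatness geometry, all carried out under only the near-optimal moment hypothesis $\E[\tau_e^2(\log\tau_e)_+]<\infty$.
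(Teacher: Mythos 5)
There is a genuine gap: the single-scale estimate at the heart of your scheme is false, and the multi-scale iteration does not work in the direction you need.

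You claim that ``shape-theorem concentration gives $\E\min_{y\in A_t}\T(0,y)\ge t-o(1)$.'' This is not true. The individual passage times $\T(0,y)$ for $y\in A_t$ with $g(y)\approx t$ fluctuate around their means by roughly $\sqrt{t/\log t}$ (precisely the BKS variance bound you invoke for the other direction), so the minimum over a growing family of probes drops \emph{below} $t$ by a polynomially large amount, not by $o(1)$. The shape theorem only gives $\T(0,y)/g(y)\to 1$, i.e.\ an error of size $o(t)$, which is useless here. Alexander's bound gives $|\E\T(0,y)-g(y)|\le C\sqrt{t\log t}$, still far from $o(1)$. Without this step, the implication $\min_y\E\T(0,y)\ge t+c_1-o(1)$ disappears, and so does your single-scale excess.

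The second structural issue is the multi-scale accumulation. You propose to ``concatenate shortest paths between consecutive spherical shells to propagate the constant excess additively.'' But $\E\T(0,\cdot)-g(\cdot)$ is \emph{subadditive}: $\E\T(0,x)\le\E\T(0,y)+\E\T(y,x)$, so concatenation gives an inequality in the wrong direction --- you cannot force the non-random excess to add up across scales this way. You would need a super-additivity mechanism, which is exactly what is missing. Moreover, the exponent $1/d$ in $(\log\log n)^{1/d}$ is asserted to come from ``$(d-1)$-dimensional cap geometry'' but no derivation is given.

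The paper's argument is quite different and avoids both obstacles. It works at a \emph{single} scale $t$ and never compares $\E\min$ with $\min\E$. Instead it doubles the direction: with $\mathcal{A}_y$ the event that $y\in S_t$ is the unique minimizer of $\T(0,\cdot,2t\mathbf{x}_d)$ among the probes, one has the deterministic inequality
\begin{equation*}
2\bigl(\E\T(0,t\mathbf{x}_d)-g(t\mathbf{x}_d)\bigr)+4d\E\tau_e\;\ge\;\E\bigl[\T(0,t\mathbf{x}_d,2t\mathbf{x}_d)-\T(0,2t\mathbf{x}_d)\bigr]\;\ge\;K\sum_{y\in S_t}\P\bigl(\{\cdots\}\cap\mathcal{A}_y\bigr),
\end{equation*}
so the non-random fluctuation at a single site controls a sum over probes. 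Each summand is then bounded below by a van den Berg--Kesten \emph{resampling} argument: force all $\sim K_t^d$ edges near $y$ to be close to $F^-$, which drops $\T(0,y,2t\mathbf{x}_d)$ by $\gtrsim K_t$ while leaving $\T(0,z,2t\mathbf{x}_d)$ for $z\neq y$ essentially unchanged (using the well-separation of probes and the usefulness of $F$). The exponent $1/d$ arises precisely because the probability cost of resampling a ball of radius $CK_t$ is $\exp(-MK_t^d)$: balancing $(\log t)^{1/8}$ probes against $\exp(-MK_t^d)$ forces $K_t\asymp(\log\log t)^{1/d}$. This resampling mechanism is the concrete replacement for your hypothesized ``order-statistics anti-concentration with geodesic-overlap tail bounds,'' which you leave undeveloped and which would be much harder to make rigorous (BKS noise-sensitivity tools control variance, not the sign of the bias). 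To repair your proposal you would need to abandon the $\E\min$ vs.\ $\min\E$ comparison and the multi-scale path concatenation, and introduce something like the doubled-point identity plus a local modification argument.
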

   The moment condition $\E [\tau_e^2 (\log{\tau_e})_+] <\infty$ above will be used to get the sublinear variance of the first passage time (see Lemma~\ref{sublinea-lem}). \eqref{fluc-time} means that the fluctuation of the first passage time around the time constant diverges. It may suggest that the fluctuation of the first passage time around the mean also diverges.
\begin{figure}[b]
  \scalebox{0.8}{\includegraphics[width=8.0cm]{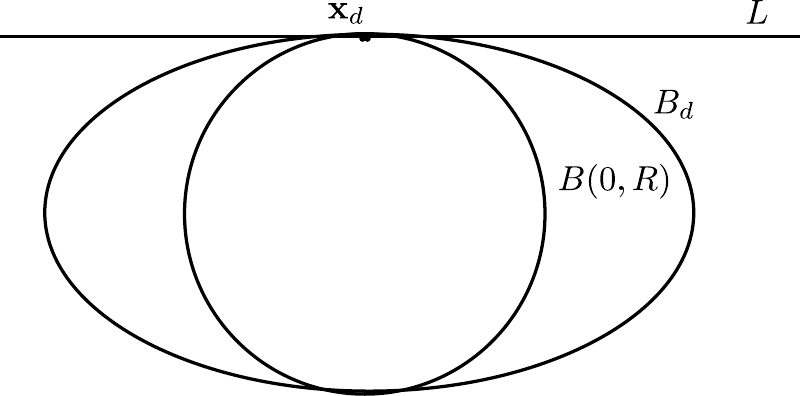}}
  \hspace{5mm}
  \scalebox{0.8}{\includegraphics[width=8.0cm]{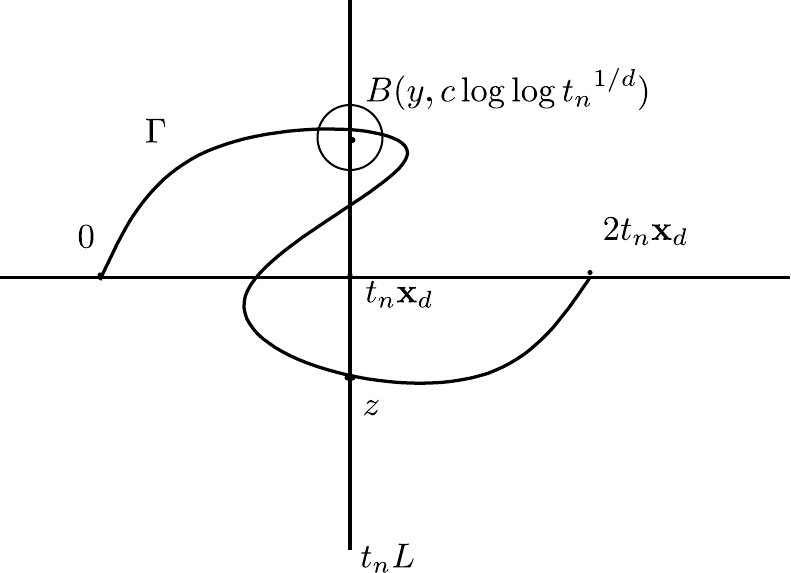}}
  \caption{}
  \label{fig:NF}
Left:  Figure of $\mathbf{x}_d$ and $L$.\\
Right: The schematic picture of Step 2 in the proof of Lemma~\ref{key:lem}.
\end{figure}
   \begin{remark}
     There certainly exists a directional flat point. In fact, we can take an arbitrary point $\mathbf{x}_{d}\in \partial B_d \cap \partial {\rm B}(0,R)$, where  $R=\sup\{r>0|~{\rm B}(0,r)\subset \B_d\}$ (see Figure~\ref{fig:NF}).
   \end{remark}
    
   We also consider the fluctuation of $G(t)$ from $t\B_d$.
\begin{Def}
   For $l>0$ and a subset $\Gamma$ of $\R^d$, let
   $$\Gamma^-_l=\{v\in\Gamma|~d(v,\Gamma^c)\geq{}l\}\text{ and }\Gamma^+_l=\{v\in\R^d|~d(v,\Gamma)\leq{}l\},$$
   where $d$ is the Euclid distance. Given three sets $A,B,C\subset \R^d$, we define the fluctuation of $A$ from $B$ inside $C$ as
    $${\rm F}_C(A,B)=\inf\{\delta>0|~B^-_{\delta}\cap C\subset A\cap C \subset B^+_{\delta}\cap C\}.$$
\end{Def}

\begin{remark}
  The results in this paper will
be formulated by using ${\rm F}_C(A, B)$ but they can also be proved for $d_H(A \cap C, B \cap  C)$ by essentially the same arguments, where $d_H$ is the Hausdorff distance.
  \end{remark}

  To consider the directional fluctuation, we define the following cone. 
  \begin{Def}
    Given $\theta\in \R^d$ and $r>0$, let
    $$\rmL(\theta,r)=\{a\cdot \mathbf{v}|~a\in[0,\infty),~\mathbf{v}\in {\rm B}(\theta,r)\}.$$
  \end{Def}
  Let us consider the divergence of non-random shape fluctuation $F(G(t),t\B_d)$, which was predicted in Remark~2 of \cite{Zhang06}. 
\begin{cor}\label{thm-main}
   Suppose that $F$ is useful and $\E [\tau_e^2 (\log{\tau_e})_+] <\infty$. Let $\mathbf{x}_d\in \partial \B_d$ be a directional flat point. Then for any $r>0$, there exists $c>0$ such that for any sufficiently large $t$,
  $${\rm F}_{\rmL(\mathbf{x }_d,r)}(G(t),t\B_d)\geq c (\log{\log{t}})^{1/d}. $$
\end{cor}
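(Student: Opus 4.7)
The plan is to derive Corollary~\ref{thm-main} from Theorem~\ref{thm-main2} by using each $x_n$ produced by that theorem as an explicit geometric witness for the shape fluctuation at a carefully chosen time, and then to pass from this discrete family of witness times to all sufficiently large $t$.

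Applying Theorem~\ref{thm-main2} I obtain a sequence $(x_n)_{n\in\N}\subset \Z^d$ and $c>0$ with $|x_n|_1=n$, $x_n/|x_n|\to \mathbf{x}_d/|\mathbf{x}_d|$ and $|\E {\rm T}(0,x_n)-{\rm g}(x_n)|\ge h_n:=c(\log\log n)^{1/d}$ for all large $n$. Since $F$ is useful one has $F(0)<p_c(d)$, so ${\rm g}$ is a genuine norm on $\R^d$, equivalent to the Euclidean norm and hence Lipschitz: $|{\rm g}(u)-{\rm g}(v)|\le C_g|u-v|$ for some $C_g>0$. The directional convergence also ensures $x_n\in {\rm L}(\mathbf{x}_d,r)$ for all $n$ large. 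I then build the witness. Set $\Delta_n:=\E {\rm T}(0,x_n)-{\rm g}(x_n)$, so $|\Delta_n|\ge h_n$. If $\Delta_n>0$ I pick $t_n:={\rm g}(x_n)+h_n/2$; then $\E {\rm T}(0,x_n)\ge t_n+h_n/2$ forces $x_n\notin G(t_n)$, while the Lipschitz bound on ${\rm g}$ gives ${\rm B}(x_n,h_n/(2C_g))\subset t_n\B_d$, so $x_n\in (t_n\B_d)^-_{h_n/(2C_g)}$. If instead $\Delta_n<0$ I pick $t_n:=\E {\rm T}(0,x_n)+h_n/2$: then $x_n\in G(t_n)$ while ${\rm g}(x_n)\ge t_n+h_n/2$ yields $d(x_n,t_n\B_d)\ge h_n/(2C_g)$. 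Either way $x_n\in {\rm L}(\mathbf{x}_d,r)$ certifies ${\rm F}_{{\rm L}(\mathbf{x}_d,r)}(G(t_n),t_n\B_d)\ge h_n/(2C_g)$.

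It remains to upgrade this from the witness sequence $(t_n)$ to every sufficiently large $t$. Because both ${\rm g}(x_n)$ and $\E {\rm T}(0,x_n)$ grow linearly in $n$, $t_n\asymp n$ and $\log\log t_n=\log\log n+O(1)$; for an arbitrary large $t$ I select $n=n(t)\asymp t$ such that $t$ lies strictly between ${\rm g}(x_n)$ and $\E {\rm T}(0,x_n)$ with margin at least $h_n/4$ on both sides, and then repeat the witness argument with this $t$ directly in place of $t_n$, arriving at ${\rm F}_{{\rm L}(\mathbf{x}_d,r)}(G(t),t\B_d)\ge h_n/(4C_g)\ge c'(\log\log t)^{1/d}$. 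The main obstacle is producing such a straddling index $n(t)$ for every large $t$: the heuristic is that as $n$ grows the gap $|\Delta_n|\ge h_n\to\infty$ dwarfs the spacings of consecutive values of both sequences $n\mapsto {\rm g}(x_n)$ and $n\mapsto \E {\rm T}(0,x_n)$, so exactly one of them crosses $t$ first and $x_n$ at the intermediate step provides the witness; making this rigorous requires a lattice-level control on $|x_{n+1}-x_n|$ that should come from the explicit construction of $(x_n)$ inside the proof of Theorem~\ref{thm-main2}.
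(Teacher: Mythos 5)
Your derivation of the witness at the special times $t_n$ is sound, and the use of the Lipschitz property of $\mathrm{g}$ in place of the paper's curvature argument (Lemma~\ref{curved}) is a clean alternative at that step. However, there is a genuine gap in the passage from the witness sequence $(t_n)$ to every large $t$, and you have correctly sensed it without closing it. The straddling argument needs the intervals $\bigl(\mathrm{g}(x_n)+h_n/4,\ \E \mathrm{T}(0,x_n)-h_n/4\bigr)$ to cover a half-line, which in turn needs the increments $\mathrm{g}(x_{n+1})-\mathrm{g}(x_n)$ (equivalently $|x_{n+1}-x_n|$) to be $o(h_n)$, with $h_n=c(\log\log n)^{1/d}$ growing extremely slowly. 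Theorem~\ref{thm-main2} alone gives no such control, and it in fact fails for the sequence the paper actually constructs: the proof produces $x_n=[y]$ with $y\in nL\cap {\rm B}(n\mathbf{x}_d,\sqrt{n})$, so consecutive terms can jump by an amount of order $\sqrt{n}$, which swamps $h_n$. So using the theorem as a black box and then interpolating cannot be made to work without an additional construction that you have not supplied.

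The fix the paper uses is structurally different: it does not invoke Theorem~\ref{thm-main2} as a finished statement, but rather the \emph{intermediate} fact established in its proof, namely that \emph{for every} sufficiently large $t$ there is a point $y=y(t)\in tL\cap {\rm B}(t\mathbf{x}_d,\sqrt t)$ with $\E \mathrm{T}(0,y)-\mathrm{g}(y)\ge c(\log^{(2)}t)^{1/d}$. Because the witness exists for the \emph{same} $t$ you want to test the shape against (rather than for a nearby $t_n$), no interpolation is needed at all: one shows that every $z\in {\rm B}(y,D_t)$ with $D_t\asymp(\log^{(2)}t)^{1/d}$ lies outside $G(t)$, and then uses Lemma~\ref{curved} to find $w\in {\rm B}(y,D_t)$ with ${\rm B}(w,D_t/4)\subset t\B_d$, giving a point of $(t\B_d)^-_{D_t/4}\cap \rmL(\mathbf{x}_d,r)$ that avoids $G(t)$. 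If you want to keep your Lipschitz route, the change you must make is to quote this per-$t$ version of the estimate, not the subsequence statement. Finally, a minor remark: the case $\Delta_n<0$ in your argument is vacuous, since $\mathrm{g}(x)\le\E\mathrm{T}(0,x)$ for $x\in\Z^d$ by subadditivity, so $\Delta_n\ge 0$ always; only the $\Delta_n>0$ branch is ever relevant.
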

The next theorem shows that $\underline{\chi}'(d)$ defined in \eqref{NF-exponent} is non-negative.
\begin{thm}\label{lem-positive-NF}
  Suppose that $F$ is non-degenerate and $\E \tau_e<\infty$. Then, there exists $c>0$ such that for any $x\in \Z^d\backslash\{0\}$,
  \begin{equation}\label{positive-NF}
    \E {\rm T}(0,x)-{\rm g}(x)\geq c.
  \end{equation}
    
  \end{thm}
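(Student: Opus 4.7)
The plan is to reduce the theorem to a uniform lower bound on the doubling defect of $\E\T(0,x)$. Since ${\rm g}(2x)=2{\rm g}(x)\le\E\T(0,2x)$, one has ${\rm g}(x)\le\E\T(0,2x)/2$, and consequently
\[
\E\T(0,x)-{\rm g}(x)\;\ge\;\tfrac12\bigl(2\,\E\T(0,x)-\E\T(0,2x)\bigr)\;=\;\tfrac12\,\E R(x),
\]
where $R(x):=\T(0,x)+\T(x,2x)-\T(0,2x)\ge 0$ by subadditivity of $\T$. It therefore suffices to produce $c'>0$ with $\E R(x)\ge c'$ for every $x\in\Z^d\setminus\{0\}$, and the theorem follows with $c=c'/2$.

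To bound $\E R(x)$ from below I would build a local bottleneck event: exhibit $E_x$, measurable with respect to the edge weights in a bounded neighborhood of $x$, such that $\P(E_x)\ge p_0>0$ and $R(x)\ge c_0>0$ on $E_x$, with both constants independent of $x$. Translation invariance then makes $\P(E_x)=\P(E_0)$ automatically. Using non-degeneracy, pick $a<b$ in the support of $F$ with $\P(\tau_e\le a),\,\P(\tau_e\ge b)>0$, fix a large integer $r$, and let $E_x$ be the event that every edge with both endpoints in the box $B=x+[-r,r]^d$ has weight $\ge b$, while every edge in a chosen bypass corridor $S_r(x)$ in the $1$-neighborhood of $B$ has weight $\le a$. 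Any $0\to x$ path must cross $B$ to reach its center, using at least $r$ expensive edges, so $\T(0,x),\T(x,2x)\ge rb$. Rerouting the concatenated optimum $0\to x\to 2x$ through $S_r(x)$ around $B$ produces a competing $0\to 2x$ path; writing $\ell_d\cdot r$ for the bypass length, we obtain $R(x)\ge r(2b-\ell_d a)$ on $E_x$.

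The main obstacle is to ensure $\ell_d a<2b$ for every non-degenerate $F$: the minimal bypass around a cube in any dimension has length $\sim 4r$, so $\ell_d\approx 4$ and one naively needs $b>2a$, which may fail for narrow-support distributions. One remedy is to replace the cube by a thin expensive slab aligned with the line from $0$ to $2x$, whose traversal costs $\approx 2rb$ while a parallel cheap corridor of length $\approx 2r+O(1)$ bypasses it; this yields $R(x)\ge r(b-a)-O(1)$ on $E_x$ and requires only $b>a$, automatic by non-degeneracy. The finitely many ``small'' $x$ for which $B$ intersects $\{0\}\cup\{2x\}$ are handled separately by direct use of non-degeneracy: for each such $x$ one has $\E\T(0,x)>{\rm g}(x)$ strictly, and the minimum over this finite set combines with the uniform bound above to give the required $c$.
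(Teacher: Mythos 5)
Your reduction is exactly the one the paper uses: writing
\[
2\bigl(\E\T(0,x)-{\rm g}(x)\bigr)\;\ge\;\E\bigl[\T(0,x)+\T(x,2x)-\T(0,2x)\bigr]
\]
(the paper states it in the translated form $\E[\T(-x,0,x)-\T(-x,x)]$, see \eqref{Important-obs}), and then seeking a local bottleneck event of positive probability, independent of $x$, on which the defect is bounded away from $0$. The reduction of small $x$ to large $x$ by sub\-additivity and homogeneity is also how the paper finishes; note that your alternative suggestion ``for each small $x$ one has $\E\T(0,x)>{\rm g}(x)$ strictly'' is not free: proving that strict inequality is essentially the same problem, so the scaling trick $\E\T(0,x)-{\rm g}(x)\ge\tfrac1{2L}(\E\T(0,2Lx)-{\rm g}(2Lx))$ is the right way to dispose of that case.

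The gap is in the bottleneck construction itself, and it is genuine. You correctly observe that the naive cube-plus-bypass needs roughly $b>2a$ and hence fails for narrow-support laws, but the proposed thin-slab fix does not close the gap. The estimate ``traversal $\approx 2rb$, bypass $\approx 2r+O(1)$'' implicitly assumes the concatenated $0\to x\to 2x$ geodesic enters and exits the slab along the direction in which the slab is long. For a local, translation-invariant event one cannot force this: the geodesic may enter and leave the slab across its \emph{thin} faces (paying only $O(\text{thickness})\cdot b$ inside), in which case the bypass between those entry and exit points must go around the long side and costs $O(\text{length})\cdot a$, and one is back to needing a quantitative gap between $a$ and $b$. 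The overhead of the bypass has to be kept on a scale much smaller than the forced expensive path length, uniformly over all possible entry/exit pairs. This is precisely what the paper's two-scale construction achieves: it uses a box of radius $L_1$ with a prescribed cheap corridor network $\tilde{\mathbb{L}}$ of axis-parallel lines hugging the smaller cube ${\rm B}_\infty(L_2-1)$, chooses $\e=1/(16dL_1)$ small enough that the cheap detour costs only $O(L_2)$ extra, and splits into two cases according to whether the path meets the ``corner'' set ${\rm B}_\infty(L_1,L_2)$; the relations $L_1\ge\frac{F^-+1}{\alpha-F^-}(L_2+1)$ and $L_2\ge\frac1{\alpha-F^-}$ are tuned so that in both cases the excess is at least $1$. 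Without this (or an equally careful) geometric design, the comparison $T(\gamma_{s,t})$ vs.\ bypass cost does not give a uniform lower bound for all non-degenerate $F$. So: right key inequality and right strategy, but the core combinatorial/geometric lemma---that a single local event with a universal corridor works for every non-degenerate law---is left unproved, and the slab heuristic does not substitute for it.
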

\subsection{Notation and terminology}
This subsection collects some notations and terminologies for the proof.
\begin{itemize}

\item We denote the Euclidean distance between two sets as 
  $$d(A,B)=\inf\{d(x,y)|~x\in A,~y\in B\}\text{\hspace{4mm}for $A,B\subset \R^d$}.$$
  When $A=\{x\}$, we write $d(x,B)$.
  
\item Let $F^-$ and $F^+$ be the infimum and supremum of the support of $F$, respectively:
  $$F^-=\inf\{\delta\ge 0|~\P(\tau_e<\delta)>0\},~F^+=\sup\{\delta\ge 0|~\P(\tau_e>\delta)>0\}.$$
\item We write $\log^{(2)}x=\log\log{x}$.
\item Given $a,b,y\in\R^d$, we define ${\rm T}(a,y,b)={\rm T}(a,y)+{\rm T}(y,b)$, which is the first passage time from $a$ to $b$ passing through $y$.
\item Given $\ell\in\N$, we define 
  \begin{align*}
    {\rm B}_{\infty}(\ell)&=[-\ell,\ell]^d\cap \Z^d.
    \end{align*}
\item Given a set $A\subset \Z^d$, we define the inner boundary $\overline{\partial} A$ as
  $$\overline{\partial} A=\{x\in A|~\exists y\notin A~\text{s.t.}~|x-y|_1=1\}.$$
\end{itemize}
\section{Proof}

Let $\mathbf{x}_d\in \partial \B_d$ be a directional flat point. Denote by $L$ a tangent plane of $\partial \B_d$ at $\mathbf{x}_d$. Remark that it is actually uniquely determined and $L$ is also the tangent plane of $\partial {\rm B}(x_1,r)$ at $\mathbf{x}_d$. Given sufficiently large $t>0$, one can find a finite subset $S_t$ of $t L$ such that the following hold:
\begin{equation}\label{Sn-cond}
  \begin{cases}
  \sharp S_t = [(\log{t})^{1/8}],\\
  \text{if $a\neq b\in S_t$, }|a-b|\ge t^{1/2}(\log{t})^{-1/8},\\
  \text{for any $a\in S_t$, }t^{1/2}(\log{t})^{-1/8}\le |a-t \mathbf{x}_{d}|\le t^{1/2}.\\
  \end{cases}
\end{equation}

 \noindent We state a basic property of a directional flat point.
      \begin{lem}\label{curved}
    Let $\B\subset \R^d$ be a convex subset and $\mathbf{x}_d\in
\partial \B$. Suppose that there exists $x_1\in\R^d$ and $r>0$ such
that ${\rm B}(x_1,r)\subset \B$ and $\mathbf{x}_d\in \partial {\rm B}(x_1,r)$. Let
$L$ be the unique tangent plane of $\partial {\rm B}(x_1,r)$ at
$\mathbf{x}_{d}$. Then there exists $C>0$ such that for any $t>1$ and
$y\in tL$ with $|y-t\mathbf{x}_d|\le \sqrt{t}$,
    $$d(y,\partial (t\B_d))\leq C.$$
  \end{lem}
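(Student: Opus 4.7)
The plan is threefold: (i) show that $L$ is a supporting hyperplane of $\B$ at $\mathbf{x}_d$, so that after scaling by $t$ any point of $tL \cap t\B$ must lie in $\partial(t\B)$; (ii) for $y \in tL$ with $|y - t\mathbf{x}_d| \le \sqrt{t}$, exhibit an explicit nearby point of $\partial{\rm B}(tx_1, tr) \subset t\B$; and (iii) combine these to bound $d(y,\partial(t\B))$ by a constant depending only on $r$.

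For (i), I would argue that by convexity of $\B$ there is some supporting hyperplane $H$ of $\B$ at $\mathbf{x}_d$; since ${\rm B}(x_1, r) \subset \B$ lies in the closed half-space cut out by $H$ while $\mathbf{x}_d \in H \cap \partial{\rm B}(x_1, r)$, the hyperplane $H$ must be tangent to the sphere $\partial{\rm B}(x_1,r)$ at $\mathbf{x}_d$. Uniqueness of the tangent hyperplane to a sphere then forces $H = L$. Rescaling by $t$, the hyperplane $tL$ supports $t\B$ at $t\mathbf{x}_d$, so $tL \cap t\B \subset \partial(t\B)$.

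For (ii), I would set up orthonormal coordinates with $t\mathbf{x}_d$ at the origin, $tL = \{z_d = 0\}$, and ${\rm B}(tx_1, tr)$ centered at $(0,\ldots,0,-tr)$. Writing $y = (u,0)$ with $|u| \le \sqrt{t}$, the point
\[
  y' := \bigl(u,\ \sqrt{(tr)^2 - |u|^2} - tr\bigr) \in \partial{\rm B}(tx_1, tr) \subset t\B,
\]
and rationalizing gives
\[
  |y - y'| = tr - \sqrt{(tr)^2 - |u|^2} = \frac{|u|^2}{tr + \sqrt{(tr)^2 - |u|^2}} \le \frac{|u|^2}{tr} \le \frac{1}{r}.
\]
For (iii), I would split into cases. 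If $y \in t\B$, then by the supporting-hyperplane property $y \in \partial(t\B)$ and $d(y, \partial(t\B)) = 0$. Otherwise $y$ lies strictly outside $t\B$ while $y' \in t\B$, so the segment $[y, y']$ crosses $\partial(t\B)$ and $d(y, \partial(t\B)) \le |y - y'| \le 1/r$. Any $C \ge 1/r$ then works uniformly in $t$.

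I do not expect any real obstacle. The only step that genuinely uses the full geometric hypothesis is (i), the identification of $L$ with a supporting hyperplane of $\B$: without it, $tL$ could cut through the interior of $t\B$ and the bound would fail for interior points of $t\B \cap tL$. The rest is an elementary Taylor-type estimate for how far the tangent plane of a radius-$tr$ sphere departs from the sphere itself over a horizontal window of size $\sqrt{t}$; that window size is exactly what makes the departure $O(1)$ rather than diverging with $t$.
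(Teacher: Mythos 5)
Your proof is correct and follows essentially the same approach as the paper: both use the sandwich of $\partial(t\B)$ between the supporting tangent plane $tL$ and the inscribed sphere $\partial {\rm B}(tx_1, tr)$, together with the elementary estimate that the sphere departs from its tangent plane by $O(1)$ over a window of width $\sqrt{t}$. You are somewhat more explicit than the paper about why $L$ is in fact a supporting hyperplane of $\B$ (and hence why points of $tL \cap t\B$ lie on $\partial(t\B)$), which the paper leaves implicit in the phrase ``$\partial(t\B_d)$ is located between $tL$ and $\partial(t{\rm B}(tx_1,r))$.''
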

\begin{proof}
By the rotation and translation, it suffices to prove the assertion in the case
where $d=2$, $x_1=r \mathbf{e}_2$ and $\mathbf{x}_d=0$ (See Figure~\ref{fig:NF}). Then $L=\{(x,0)|~x\in\R\}$. Note that $\partial (t{\rm B}(x_1,r))$ can be expressed locally as the graph of the function $x\rightarrow tr-t\sqrt{r^2-(x/t)^2}$
and if $|x|\le \sqrt{t}$, $tr-t\sqrt{r^2-(x/t)^2}\leq C$ with some
constant $C>0$ independent of $t$. Since $\partial (t\B_d)$ is located between
$tL$ and $\partial (t{\rm B}(tx_1,r))$, the desired bound $d(y,\partial (t\B_d))\leq C$
follows.
\end{proof}
By using Lemma~\ref{curved}, we get for any $y\in S_t$,
\begin{equation}
  |{\rm g}(y)-{\rm g}(t\mathbf{x}_d)|\leq 2dC\E \tau_e.\label{diff-g}
\end{equation}
We fix $\e\in(0,1/2)$ and $B>0$ to be a small constant and a large constant, respectively. If
$\sup_{y\in L_t\cap {\rm B}(t\mathbf{x}_d,\sqrt{t})}|\E[{\rm T}(0,y)]-{\rm g}(y)|> B t^{1/2-\e},$ then we can take such $[y]=x_n$ with $t=n$ to get Theorem~\ref{thm-main2}. In the following, we assume the contrary, i.e.,
\begin{align}\label{contrary}
  \sup_{y\in L_t\cap {\rm B}(t\mathbf{x}_d,\sqrt{t})}|\E[{\rm T}(0,y)]-{\rm g}(y)|\leq B t^{1/2-\e},
\end{align}
until \eqref{final-result}. Then by \eqref{diff-g}, we have
\begin{equation}\label{control-expect}
  \sup_{y\in L_t\cap {\rm B}(t\mathbf{x}_d,\sqrt{t})}|\E[{\rm T}(0,y)]-{\rm g}(t\mathbf{x}_d)|\leq 2B t^{1/2-\e}.
\end{equation}
 Note that for $y\in t L\cap {\rm B}(t\mathbf{x}_d,\sqrt{t})$, by shift invariance of the first passage time,
 \begin{align}\label{shif-inv}
   |\E[{\rm T}(y,2t\mathbf{x}_d)]-\E [{\rm T}(0,2t\mathbf{x}_d-y)]|\le 2d\E\tau_e,
   \end{align}
 and $2t\mathbf{x}_d-y\in tL\cap {\rm B}(t\mathbf{x}_d,t^{1/2}).$ Thus, 
\begin{equation}\label{control-expect2}
 \sup_{y\in L_t\cap {\rm B}(t\mathbf{x}_d,\sqrt{t})}|\E[{\rm T}(y,2t\mathbf{x}_d)]-{\rm g}(t\mathbf{x}_d)|\leq 2B t^{1/2-\e}+ 2d\E\tau_e \leq 3B t^{1/2-\e}.
\end{equation}
It is worth noting that \eqref{contrary} is used only in Lemma~\ref{estimate2} and the other arguments are free from this assumption.
We first estimate $\E[{\rm T}(0,t\mathbf{x}_d)] -{\rm g}(t\mathbf{x}_d)$ from below. The following observation, in particular \eqref{Key-ineq}, is simple but a powerful tool to get the lower bound of the non-random flucuation. In fact, we use a similar estimate to prove Theorem~\ref{lem-positive-NF}. 
\begin{prop}\label{estimate1}
Let $\mathcal{A}_y=\{\forall z\in S_t\text{ with $z\neq y$},~{\rm T}(0,y,2t\mathbf{x}_{d})< {\rm T}(0,z,2t\mathbf{x}_{d})\}$. For any $K>0$,
\begin{equation}
  2(\E {\rm T}(0,t\mathbf{x}_d)-{\rm g}(t\mathbf{x}_d))+4d\E \tau_e \geq K\sum_{y\in S_t}\P(\{{\rm T}(0,t\mathbf{x}_d,2t\mathbf{x}_d)-{\rm T}(0,y,2t\mathbf{x}_d)> K\}\cap\mathcal{A}_y).
\end{equation}
\end{prop}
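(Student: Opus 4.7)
The plan is to introduce the random variable $M'=\min_{y\in S_t}\rmT(0,y,2t\mathbf{x}_d)$ and sandwich $\E[\rmT(0,t\mathbf{x}_d,2t\mathbf{x}_d)-M']$ between the left-hand and right-hand sides of the claimed inequality. The intuition is that $\rmT(0,t\mathbf{x}_d,2t\mathbf{x}_d)$ is an upper bound on $\rmT(0,2t\mathbf{x}_d)$ obtained by forcing the geodesic through $t\mathbf{x}_d$; if it frequently happens that routing through some other $y\in S_t$ is cheaper by at least $K$, then $\E \rmT(0,t\mathbf{x}_d,2t\mathbf{x}_d)$ must exceed $\E \rmT(0,2t\mathbf{x}_d)$, and hence $2\,g(t\mathbf{x}_d)$, by at least $K$ times the total probability of such "shortcuts".

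For the lower bound on $\E[\rmT(0,t\mathbf{x}_d,2t\mathbf{x}_d)-M']$, I will use that the events $\mathcal{A}_y$ ($y\in S_t$) are pairwise disjoint by definition of strict uniqueness, that $\rmT(0,t\mathbf{x}_d,2t\mathbf{x}_d)-M'\ge 0$ pointwise, and that on $\mathcal{A}_y$ the minimum $M'$ equals $\rmT(0,y,2t\mathbf{x}_d)$. A Markov-type estimate then yields
\[
  \E[\rmT(0,t\mathbf{x}_d,2t\mathbf{x}_d)-M']\ge \sum_{y\in S_t}\E\bigl[(\rmT(0,t\mathbf{x}_d,2t\mathbf{x}_d)-\rmT(0,y,2t\mathbf{x}_d))\mathbf{1}_{\mathcal{A}_y}\bigr]\ge K\sum_{y\in S_t}\P\bigl(\{\rmT(0,t\mathbf{x}_d,2t\mathbf{x}_d)-\rmT(0,y,2t\mathbf{x}_d)>K\}\cap\mathcal{A}_y\bigr).
\]

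For the upper bound on the same quantity, I will combine three ingredients. First, subadditivity gives $\rmT(0,2t\mathbf{x}_d)\le \rmT(0,y,2t\mathbf{x}_d)$ for every $y$, so $\rmT(0,2t\mathbf{x}_d)\le M'$ and therefore $\E M'\ge \E \rmT(0,2t\mathbf{x}_d)$. Second, by the paper's bound $g(x)\le \E \rmT(0,x)+2d\E\tau_e$ applied to $x=2t\mathbf{x}_d\in\R^d$, this gives $\E M'\ge 2g(t\mathbf{x}_d)-2d\E\tau_e$. Third, writing $\rmT(0,t\mathbf{x}_d,2t\mathbf{x}_d)=\rmT(0,[t\mathbf{x}_d])+\rmT([t\mathbf{x}_d],[2t\mathbf{x}_d])$ and noting that $[2t\mathbf{x}_d]-[t\mathbf{x}_d]$ differs from $[t\mathbf{x}_d]$ by a vector with sup-norm at most $1$, a short deterministic path yields $\E \rmT(0,t\mathbf{x}_d,2t\mathbf{x}_d)\le 2\E \rmT(0,t\mathbf{x}_d)+2d\E\tau_e$. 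Putting these together gives
\[
  \E[\rmT(0,t\mathbf{x}_d,2t\mathbf{x}_d)-M']\le 2(\E \rmT(0,t\mathbf{x}_d)-g(t\mathbf{x}_d))+4d\E\tau_e,
\]
and combining with the lower bound finishes the proof.

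There is no deep obstacle here; the proof is essentially bookkeeping around the natural identity $\rmT(0,t\mathbf{x}_d,2t\mathbf{x}_d)-\min_{y\in S_t}\rmT(0,y,2t\mathbf{x}_d)\ge 0$. The only delicate point is tracking the additive constant $4d\E\tau_e$, which arises from two independent lattice-rounding corrections (one when passing from $\rmT(0,2t\mathbf{x}_d)$ to $g(2t\mathbf{x}_d)$, and one when splitting $\rmT(0,t\mathbf{x}_d,2t\mathbf{x}_d)$ as twice $\rmT(0,t\mathbf{x}_d)$), each contributing $2d\E\tau_e$ via the inequalities recalled in the introduction.
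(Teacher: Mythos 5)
Your proposal follows the same sandwich strategy as the paper — bound $\E[\rmT(0,t\mathbf{x}_d,2t\mathbf{x}_d)-(\text{reference})]$ from above by the left-hand side and from below by the Markov-type sum — but your choice of reference quantity $M'=\min_{y\in S_t}\rmT(0,y,2t\mathbf{x}_d)$ introduces a genuine gap. You assert that $\rmT(0,t\mathbf{x}_d,2t\mathbf{x}_d)-M'\ge 0$ pointwise, and your lower-bound chain relies on this nonnegativity twice (to discard the complement of $\bigcup_y\mathcal{A}_y$, and again to justify the Markov step). But this pointwise inequality is false: since $t\mathbf{x}_d\notin S_t$, the minimum $M'$ is taken over waypoints $y$ that are all at distance at least $t^{1/2}(\log t)^{-1/8}$ from $t\mathbf{x}_d$, and there is no subadditivity relation forcing $\rmT(0,t\mathbf{x}_d,2t\mathbf{x}_d)$ to dominate any $\rmT(0,y,2t\mathbf{x}_d)$. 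Indeed, when routing through $t\mathbf{x}_d$ is the most efficient option (arguably the typical situation), $\rmT(0,t\mathbf{x}_d,2t\mathbf{x}_d)<M'$.

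The repair, which is exactly what the paper does, is to replace $M'$ by $\rmT(0,2t\mathbf{x}_d)$. This quantity satisfies the two facts you actually need: $\rmT(0,t\mathbf{x}_d,2t\mathbf{x}_d)\ge\rmT(0,2t\mathbf{x}_d)$ pointwise (forcing a waypoint cannot shorten the geodesic), and $\rmT(0,2t\mathbf{x}_d)\le\rmT(0,y,2t\mathbf{x}_d)$ for every $y\in S_t$. Then, writing $X_y=\rmT(0,t\mathbf{x}_d,2t\mathbf{x}_d)-\rmT(0,y,2t\mathbf{x}_d)$ and $Y=\rmT(0,t\mathbf{x}_d,2t\mathbf{x}_d)-\rmT(0,2t\mathbf{x}_d)\ge 0$, one has $Y\ge X_y$ for each $y$, so on $\mathcal{A}_y\cap\{X_y>K\}$ the nonnegative variable $Y$ exceeds $K$, and disjointness of $\{\mathcal{A}_y\}$ gives
\begin{equation*}
  \E[Y]\;\ge\;\sum_{y\in S_t}\E\bigl[Y;\,\mathcal{A}_y\cap\{X_y>K\}\bigr]\;\ge\;K\sum_{y\in S_t}\P\bigl(\mathcal{A}_y\cap\{X_y>K\}\bigr).
\end{equation*}
Your upper bound $\E[Y]\le 2(\E\rmT(0,t\mathbf{x}_d)-{\rm g}(t\mathbf{x}_d))+4d\E\tau_e$ is correct as written (in fact you proved it for the smaller quantity $\E[\rmT(0,t\mathbf{x}_d,2t\mathbf{x}_d)-M']$, but the same two lattice-rounding corrections give it for $\E[Y]$ directly). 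With this one substitution, the proof matches the paper's.
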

We postpone the proof until the proofs of Theorem~\ref{thm-main2} and Corollary~\ref{thm-main} are completed. Let $M>0$ and $c=\frac{1}{32(1+M)}$. Then we take $K_t=(c\log^{(2)}(t))^{1/d}$. Next we will estimate $\P(\{{\rm T}(0,t\mathbf{x}_d,2t\mathbf{x}_d)-{\rm T}(0,y,2t\mathbf{x}_d)> K_t\}\cap\mathcal{A}_y)$ from below.
\begin{prop}\label{estimate2}
  If we take $M>0$ sufficiently large, then for any sufficiently large $t>1$ and $y\in S_t$,
 \begin{equation}
   \begin{split}
     &\quad \P(\{{\rm T}(0,t\mathbf{x}_d,2t\mathbf{x}_d)-{\rm T}(0,y,2t\mathbf{x}_d)> K_t\}\cap\mathcal{A}_y)\\
     &\geq \exp{(-MK_t^d)}(3/4-K_t^{-1}(\E[{\rm T}(0,y, 2t \mathbf{x}_{d})]-2{\rm g}(t\mathbf{x}_d)+4d\E\tau_e)). \label{form:est2}
     \end{split}
  \end{equation}
\end{prop}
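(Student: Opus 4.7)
The plan is to implement a local environment surgery inside a box $B_y := y + [-K_t, K_t]^d \cap \Z^d$ centered at $y$. By usefulness of $F$, choose $\delta > 0$ so small that $p := \P(\tau_e \leq F^- + \delta) \in (0,1)$ and $F^- + \delta$ is strictly below the per-edge weight encoded in ${\rm g}$. Define the ``fast edges'' event
$$H_y := \{\tau_e \leq F^- + \delta \text{ for every edge } e \subset B_y\},$$
which is independent of the $\sigma$-algebra $\kF$ generated by edges outside $B_y$. Since $B_y$ contains at most $c_d K_t^d$ edges, $\P(H_y) \geq p^{c_d K_t^d}$; taking $M$ large as a function of $p$ and $d$ gives $\P(H_y) \geq \exp(-MK_t^d/2)$, which absorbs the $\exp(-MK_t^d)$ prefactor in \eqref{form:est2}.

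Next I introduce the ``flooded'' environment $\tilde\omega$ with $\tilde\omega_e = F^- + \delta$ for $e \subset B_y$ and $\tilde\omega_e = \omega_e$ otherwise; the passage times $\tilde{\rm T}$ in $\tilde\omega$ are $\kF$-measurable. On $H_y$ monotonicity yields ${\rm T}(0, y, 2t\mathbf{x}_d) \leq \tilde{\rm T}(0, y, 2t\mathbf{x}_d)$. The key geometric input is the separation $|y - z| \geq t^{1/2}(\log t)^{-1/8} \gg K_t$ for every $z \in (S_t \setminus \{y\}) \cup \{t\mathbf{x}_d\}$ guaranteed by \eqref{Sn-cond}: any $0 \to z \to 2t\mathbf{x}_d$ path that detours through $B_y$ pays at least $2\,d(z, B_y) \gg K_t$ extra edges outside $B_y$, while the savings inside $B_y$ are only $O(K_t)$. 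Hence with overwhelming $\kF$-probability ${\rm T}(0, z, 2t\mathbf{x}_d) = \tilde{\rm T}(0, z, 2t\mathbf{x}_d)$, and similarly for $t\mathbf{x}_d$. Consequently the event in \eqref{form:est2} contains $H_y \cap \tilde E_y$ up to a negligible error, where
$$\tilde E_y := \{\tilde{\rm T}(0, y, 2t\mathbf{x}_d) + K_t < \tilde{\rm T}(0, t\mathbf{x}_d, 2t\mathbf{x}_d)\} \cap \bigcap_{z \in S_t \setminus \{y\}} \{\tilde{\rm T}(0, y, 2t\mathbf{x}_d) < \tilde{\rm T}(0, z, 2t\mathbf{x}_d)\}$$
is $\kF$-measurable. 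By independence of $H_y$ and $\kF$, this gives $\P(\cdots) \geq \P(H_y)\,\P(\tilde E_y)$.

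To lower bound $\P(\tilde E_y)$, I would estimate $\E \tilde{\rm T}(0, y, 2t\mathbf{x}_d)$ by exhibiting a path crossing $B_y$ entirely through $(F^- + \delta)$-edges; this yields $\E \tilde{\rm T}(0, y, 2t\mathbf{x}_d) \leq \E {\rm T}(0, y, 2t\mathbf{x}_d) - c_1 K_t$ for some $c_1 > 0$. Combined with $\E \tilde{\rm T}(0, z, 2t\mathbf{x}_d) \geq {\rm g}(z) + {\rm g}(2t\mathbf{x}_d - z) \geq 2{\rm g}(t\mathbf{x}_d)$ (by convexity of ${\rm g}$ and $\E {\rm T} \geq {\rm g}$) and $\E \tilde{\rm T}(0, t\mathbf{x}_d, 2t\mathbf{x}_d) \geq 2{\rm g}(t\mathbf{x}_d)$, one-sided Markov-type estimates applied to the differences $\tilde{\rm T}(0, z, 2t\mathbf{x}_d) - \tilde{\rm T}(0, y, 2t\mathbf{x}_d)$ and $\tilde{\rm T}(0, t\mathbf{x}_d, 2t\mathbf{x}_d) - K_t - \tilde{\rm T}(0, y, 2t\mathbf{x}_d)$ produce the factor $3/4 - K_t^{-1}(\E {\rm T}(0, y, 2t\mathbf{x}_d) - 2{\rm g}(t\mathbf{x}_d) + 4d\E\tau_e)$. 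The main technical obstacle is the ``no-detour'' step: rigorously showing that modifying $B_y$ leaves ${\rm T}(0, z, 2t\mathbf{x}_d)$ and ${\rm T}(0, t\mathbf{x}_d, 2t\mathbf{x}_d)$ essentially unchanged, which requires pitting the deterministic detour length against the random savings inside $B_y$ and leveraging the time-constant lower bound for outside segments. Extracting the precise constant $3/4$ from the Markov estimates (rather than some other positive constant) is also delicate, and is what dictates the coordinated choice $K_t = (c\log^{(2)} t)^{1/d}$ with $c = 1/(32(1+M))$.
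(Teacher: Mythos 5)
Your high-level strategy (plant fast edges in a box near $y$, argue this makes the passage through $y$ strictly better than the competitors, and pay the price $\exp(-MK_t^d)$ for the fast edges) matches the paper's plan. However, the implementation is genuinely different, and as written it has a gap that I do not see how to close without essentially reverting to the paper's argument.

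The paper does \emph{not} deterministically flood $B_y$. It resamples the edges in ${\rm B}(y, CK_t)$ using an \emph{independent copy} $\tau^*$ and then conditions on $\tau^*_e \leq F^- + \delta/2$. Because $\tilde\tau \stackrel{d}{=} \tau$, the desired probability for $\tau$ equals the one for $\tilde\tau$, so one only needs a pathwise implication from the resampled environment back to $\tilde{\mathcal{A}}_y$. Crucially, the paper's ``black'' condition (a $\tau$-event saying that passage times between nearby points around $y$ are at least $(F^-+\delta)|a-b|_1$) is what \emph{guarantees} a savings of $\geq 2K_t$ after resampling, and the constant $C$ is tuned so that $CK_t\delta/2 > 2K_t$. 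Your box $B_y$ of radius $K_t$ is too small: even with the favorable geometry, flooding to $F^-+\delta$ saves $O(\delta K_t)$, which does not dominate $2K_t$ unless $\delta>2$; the paper uses ${\rm B}(y,CK_t)$ with $C$ chosen large relative to $1/\delta$.

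The more serious gap is the ``no-detour'' step. On $H_y$ you have $\omega_e\leq F^-+\delta=\tilde\omega_e$ for $e\subset B_y$, hence $\omega\leq\tilde\omega$ everywhere and therefore ${\rm T}\leq\tilde{\rm T}$ for \emph{every} pair of endpoints. This means, on $H_y$, the original quantities ${\rm T}(0,t\mathbf{x}_d,2t\mathbf{x}_d)$ and ${\rm T}(0,z,2t\mathbf{x}_d)$ can only be \emph{smaller} than their flooded counterparts, which is the wrong direction for concluding ${\rm T}(0,t\mathbf{x}_d,2t\mathbf{x}_d)-{\rm T}(0,y,2t\mathbf{x}_d)>K_t$ and $\mathcal{A}_y$ from $\tilde E_y$. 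To make your argument work you would need $\tilde{\rm T}(0,z,\ldots)-{\rm T}(0,z,\ldots)$ to be negligible for all competitors $z$, but a self-avoiding $\omega$-geodesic can wind through as many as $O(K_t^d)$ edges of $B_y$, each saving up to $\delta$, so this difference is \emph{not} a priori $O(K_t)$. Your remark that ``the savings inside $B_y$ are only $O(K_t)$'' is unjustified, and the claim that ${\rm T}(0,z,\ldots)=\tilde{\rm T}(0,z,\ldots)$ ``with overwhelming $\mathcal{F}$-probability'' cannot be made unconditional: on $H_y$ the interior of $B_y$ is attractive, so the very conditioning that helps the $y$-path also tempts the $z$-paths. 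The paper's event $\mathcal{A}_1$ plus the resampling structure and the ``black'' condition are precisely what rule this out: the case analysis in Step~2 of Lemma~\ref{key:lem} splits on whether the resampled competitor geodesic touches ${\rm B}(y,CK_t)$ and uses $\mathcal{A}_1$ to show the detour is prohibitively expensive. You flag this as a ``technical obstacle'' but offer no mechanism to resolve the dependence between $H_y$ and the no-detour property; this is not a routine cleanup but the heart of the proof. Finally, the constant $3/4$ in \eqref{form:est2} does not come from tuning Markov-type estimates as you suggest, but simply from $\P(\mathcal{A}^c)+\P(y\text{ not black})\leq 1/4$ for large $t$, via \eqref{A2} and \eqref{A3}.
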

 We prove our main theorems using the above propositions. We first suppose that there exists $y\in tL\cap {\rm B}(t\mathbf{x}_d,t^{1/2})$ such that  $\E [{\rm T}(0,y)]- {\rm g}(t\mathbf{x}_d)\geq K_t/8.$ By \eqref{diff-g},
$$\E[{\rm T}(0,y)] -{\rm g}(y)\geq \E [{\rm T}(0,y)] -{\rm g}(t\mathbf{x}_d)-2dC\E \tau_e\geq K_t/16.$$
    Otherwise, if for any $y\in tL\cap {\rm B}(t\mathbf{x}_d,t^{1/2})$, $\E [{\rm T}(0,y)- {\rm g}(t\mathbf{x}_d)]\leq K_t/8$, by \eqref{shif-inv}, then for sufficiently large $t>1$, we obtain
    \begin{equation}
      \begin{split}\E[{\rm T}(0,y,2t\mathbf{x}_d)]-2{\rm g}(t\mathbf{x}_d)+4d\E \tau_e&\leq K_t/4 +8d\E \tau_e\\
        &\leq K_t/2.
      \end{split}
      \end{equation}
      Recall that $\sharp S_t=[(\log{t})^{1/8}]$ and $K_t=(c\log^{(2)}(t))^{1/d}$. Combining with Proposition~\ref{estimate1} and \ref{estimate2},
\begin{equation}\label{final-result}
\begin{split}
\E[{\rm T}(0,t\mathbf{x}_d)] -{\rm g}(t\mathbf{x}_d)+2d\E\tau_e&\ge \frac{1}{8}K_t \sum_{y\in S_t} \exp{(-MK_t ^d)}\\
&= \frac{1}{8}K_t[(\log{t})^{1/8}] \exp{(-Mc\log^{(2)}{t})}> K_t/8.
\end{split}
\end{equation}    
Putting things together, with some constant $c>0$, we have that for sufficiently large $t>0$, there exists $y\in tL\cap {\rm B}(t\mathbf{x}_d,t^{1/2})$ such that $\E [{\rm T}(0,y)- {\rm g}(y)]\geq c(\log^{(2)}(t))^{1/d}$ under the assumption in Theorem~\ref{thm-main2}. This proves Theorem~\ref{thm-main2} by letting $[y]=x_n$ with $n=t$.\\

Next we will prove Corollary~\ref{thm-main}.
We write $D_t=\frac{c}{16d\E[\tau_e]}(\log^{(2)}(t))^{1/d}$. By \eqref{diff-g} and ${\rm g}(t\mathbf{x}_d)=t$, for any $z\in B\left(y,D_t\right)$, we get
\begin{equation}
  \begin{split}
    \E [{\rm T}(0,z)]-t&\geq  \E [{\rm T}(0,y)]-{\rm g}(y) -|{\rm g}(t\mathbf{x}_d)-{\rm g}(y)|-|\E [{\rm T}(0,z)]- \E [{\rm T}(0,y)]|\\
    &\geq \frac{c}{2}(\log^{(2)}(t))^{1/d},
    \end{split}
\end{equation}
which implies $z\notin G(t)$. Lemma~\ref{curved} yields that there exists $w\in B\left(y,D_t\right)$ such that $B\left(w,D_t/4\right)\subset t\mathbb{B}_d$. Then, since $d(w ,(t\B_d)^c)\geq D_t/4$, $w\in (t\B_d)_{D_t/4}^-$. Therefore, since $w\notin G(t)$ and $w\in \rmL(\mathbf{x }_d,r)$ for sufficiently large $t$, $(t\B_d)_{D_t/4}^-\cap \rmL(\mathbf{x }_d,r)\not\subset G(t)$, which implies
$${\rm F}_{\rmL(\mathbf{x }_d,r)}(G(t),t\B_d)\geq D_t/4.$$
\begin{proof}[Proof of Proposition\ref{estimate1}]
  For any $t>1$, observe that
  \begin{equation}\label{Key-ineq}
    \begin{split}
    &\quad 2(\E {\rm T}(0,t\mathbf{x}_d)-{\rm g}(t\mathbf{x}_d))+4d\E \tau_e\\
    &= \E [{\rm T}(0,t\mathbf{x}_d,2t\mathbf{x}_d)-{\rm T}(0,2t\mathbf{x}_d)]\\
    &\hspace{8mm}+(\E {\rm T}(0,t\mathbf{x}_d)-\E [{\rm T}(t\mathbf{x}_d,2t\mathbf{x}_d)]+2d\E \tau_e)+(\E[{\rm T}(0,2t\mathbf{x}_d)]-2{\rm g}(t\mathbf{x}_d)+2d\E \tau_e)\\
      &\geq \E [{\rm T}(0,t\mathbf{x}_d,2t\mathbf{x}_d)-{\rm T}(0,2t\mathbf{x}_d)],
      \end{split}
    \end{equation}
      where we have used  $\E {\rm T}(0,2t\mathbf{x}_d)+2d\E \tau_e\ge 2{\rm g}(t\mathbf{x}_d)$ and $|\E {\rm T}(0,t\mathbf{x}_d)-\E {\rm T}(t\mathbf{x}_d,2t\mathbf{x}_d)|\le 2d\E \tau_e$. \\
 Then since ${\rm T}(0,x,y)\ge {\rm T}(0,y)$ for any $x,y\in\R^d$ and $\{\mathcal{A}_y\}_{y\in S_t}$ are disjoint, we have
\begin{equation}
\begin{split}
\E [{\rm T}(0,t\mathbf{x}_d,2t\mathbf{x}_d)-{\rm T}(0,2t\mathbf{x}_d)]&\geq \sum_{y\in S_t}\E[{\rm T}(0,t\mathbf{x}_d,2t\mathbf{x}_d)-{\rm T}(0,2t\mathbf{x}_d);~\mathcal{A}_y]\\
&\ge \sum_{y\in S_t}\E[{\rm T}(0,t\mathbf{x}_d,2t\mathbf{x}_d)-{\rm T}(0,y,2t\mathbf{x}_d);~\mathcal{A}_y].
\end{split}
\end{equation}
 Then this is further bounded from below by 
\begin{equation}\begin{split}
 K\sum_{y\in S_t}\P(\{{\rm T}(0,t\mathbf{x}_d,2t\mathbf{x}_d)-{\rm T}(0,y,2t\mathbf{x}_d)> K\}\cap\mathcal{A}_y).
\end{split}\end{equation}
\end{proof}
We move to the proof of Proposition~\ref{estimate2}. We prepare some notations for the proof.
  \begin{Def}
 We define events $\mathcal{A}_1$ and $\mathcal{A}_2$ as
  \begin{equation}
    \begin{split}
\mathcal{A}_1&= \{\forall a,b\in {\rm B}(0,t^2)\text{ satisfying $|a-b|\ge t^{1/4}$,~${\rm T}(a,b)\ge (F^-+\delta)|a-b|_1$}\},\\
\mathcal{A}_2&= \{\forall y\in S_t,~\max_{z=0,2t\mathbf{x}_d}\{|{\rm T}(z,y)-\E[{\rm T}(z,y)]|\}\le t^{1/2}(\log{t})^{-1/4}\},
      \end{split}
  \end{equation}
  where $\delta$ will be defined in Lemma~\ref{useful} below.
   We set $\mathcal{A}=\mathcal{A}_1\cap \mathcal{A}_2$. 
\end{Def}
  \begin{Def}
Let $C$ be a positive constant to be chosen later.  
\begin{enumerate}
\item  A point $y\in S_t$ is said to be black if for any $a,b\in {\rm B}(y,C K_t)$ satisfying $|a-b|_1\ge K_t$, $${\rm T}(a,b)\ge (F^-+\delta)|a-b|_1. $$
\item A point $y\in S_t$ is said to be good if ${\rm T}(0,y, 2t \mathbf{x}_{d})-{\rm T}(0, 2t \mathbf{x}_{d}) < K_t$ and $y$ is black.
\end{enumerate}
\end{Def}
The following is a crucial property of a useful distribution.
\begin{lem}\label{useful}
If $F$ is useful, there exsit $\delta>0$ and $D>0$ such that for any $v,w\in\Z^d$,
$$\P({\rm T}(v,w)<(F^-+\delta)|v-w|_1)\leq{}e^{-D|v-w|_1}.$$
\end{lem}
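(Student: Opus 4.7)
The plan is to split into two cases according to whether $F^- = 0$ or $F^- > 0$, reducing each to a subcriticality statement for Bernoulli bond percolation (unoriented in the first case, oriented in the second). By usefulness of $F$ and right-continuity of its distribution function, I first pick $\delta_0 > 0$ small enough so that $p := \P(\tau_e < F^- + \delta_0)$ is strictly less than $p_c(d)$ when $F^- = 0$ and strictly less than $\vec{p}_c(d)$ when $F^- > 0$. Call an edge $e$ \emph{cheap} if $\tau_e < F^- + \delta_0$ and \emph{expensive} otherwise. For any path $\gamma$ from $v$ to $w$ of length $L$, each cheap edge still contributes at least $F^-$ to $\mathrm{T}(\gamma)$ while each expensive edge contributes at least $F^-+\delta_0$, so
\[
\mathrm{T}(\gamma) \;\geq\; F^- L \;+\; \delta_0\, N_{\mathrm{exp}}(\gamma),
\]
where $N_{\mathrm{exp}}(\gamma)$ counts the expensive edges of $\gamma$. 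The lemma therefore reduces to the estimate: for some $\kappa \in (0,1)$ and $D > 0$, with probability at least $1 - e^{-D|v-w|_1}$ one has $N_{\mathrm{exp}}(\gamma) \ge \kappa L$ for \emph{every} path $\gamma$ from $v$ to $w$; the $\delta$ produced by the lemma can then be taken to be $\delta_0 \kappa$, with $L \ge |v-w|_1$ delivering the stated bound.

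In the case $F^- = 0$ the cheap edges form a subcritical Bernoulli percolation with parameter $p < p_c(d)$, so by Aizenman--Barsky--Menshikov the cheap-cluster radius has an exponential tail. A direct union bound over lattice paths of length $L$ is overwhelmed by the combinatorial entropy $(2d)^L$, so I would instead renormalize at a large scale $K$: declare a $K$-box \emph{defective} whenever it supports a long cheap connection, use subcriticality to push the defective-box probability below any prescribed threshold, and invoke Liggett--Schonmann--Stacey stochastic domination so that for $K$ large enough the defective boxes are dominated by a genuinely subcritical block percolation. Any path from $v$ to $w$ must cross on the order of $|v-w|_1 / K$ boxes, and each non-defective box forces a positive fraction of the edges it contributes to $\gamma$ to be expensive. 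This yields $N_{\mathrm{exp}}(\gamma) \ge \kappa L$ with exceptional probability at most $e^{-cL}$, which summed over $L \ge |v-w|_1$ gives the claimed $e^{-D|v-w|_1}$.

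The case $F^- > 0$ runs along the same renormalization scheme, with $p_c(d)$ replaced by $\vec{p}_c(d)$ and oriented percolation replacing unoriented percolation. The extra step is to extract from any path $\gamma$ from $v$ to $w$ an oriented sub-collection of at least $|v-w|_1$ edges, each pointing in a coordinate direction where $\gamma$ is net advancing; this is possible because in every coordinate $i$ the net signed displacement of $\gamma$ equals $w_i - v_i$. Cheap edges among these oriented candidates then form a subcritical oriented Bernoulli bond percolation of parameter $p < \vec{p}_c(d)$, whose exponential oriented-cluster bound (Durrett, Menshikov) supplies the renormalized input, and the block-Peierls argument of the first case proceeds verbatim.

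The main obstacle is precisely the passage from pointwise subcriticality (exponentially small cheap clusters, resp.\ oriented cheap clusters) to uniform control over the infinite family of paths from $v$ to $w$: the bare lattice entropy $(2d)^L$ defeats any direct union bound on cheap-edge counts along individual paths. The block renormalization is the standard remedy, trading the entropy at scale $1$ for a much smaller entropy at scale $K$ that is beaten by the defective-block probability. Case~2 demands only the extra bookkeeping to recast the relevant cheap structure in oriented terms before invoking the subcriticality of $\vec{p}_c(d)$; once that is in place, the two cases fuse into a single argument.
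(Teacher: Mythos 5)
The paper itself offers no proof of this lemma; it simply cites van den Berg--Kesten, Lemma 5.5, so there is no internal argument to compare your proposal against, and I will assess it on its own terms. The central reduction you make — that for some $\kappa\in(0,1)$, with probability $\geq 1-e^{-D|v-w|_1}$ \emph{every} path $\gamma$ from $v$ to $w$ satisfies $N_{\mathrm{exp}}(\gamma)\geq\kappa L$ — is false in the case $F^->0$. Usefulness there only guarantees $p=\mathbb{P}(\tau_e<F^-+\delta_0)<\vec p_c(d)$, and since $\vec p_c(d)>p_c(d)$ strictly, it is entirely possible that $p\geq p_c(d)$, so the cheap edges percolate in the ordinary, unoriented sense. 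A self-avoiding path can then wander for an arbitrarily long stretch inside the infinite cheap cluster before heading to $w$, making $N_{\mathrm{exp}}(\gamma)/L$ as small as one likes; no choice of $\kappa>0$ works uniformly over all $\gamma$. The claim must be restricted to paths with $L\leq C|v-w|_1$, which is where $F^->0$ is actually used: longer paths satisfy $\mathrm{T}(\gamma)\geq F^-L\geq (F^-+\delta)|v-w|_1$ deterministically. This restriction is not a cosmetic adjustment — it is the reason the two cases cannot ``fuse into a single argument,'' and it is absent from your sketch.

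Beyond that, two further steps are asserted rather than established. First, in the $F^->0$ case, the ``oriented sub-collection of at least $|v-w|_1$ forward edges'' you extract is not an oriented path; deleting the backward steps fragments $\gamma$ into up to $b+1$ oriented sub-paths (with $b$ the number of backward steps), and oriented percolation only controls contiguous oriented cheap paths, not scattered forward edges. You need to combine the bound $b\leq(C-1)|v-w|_1/2$ (coming from the length restriction above) with a renormalized bound on the oriented cheap clusters each fragment can traverse; as written, the link to $\vec p_c(d)$ is not made. Second, in the $F^-=0$ renormalization, the assertion that ``each non-defective box forces a positive fraction of the edges it contributes to $\gamma$ to be expensive'' does not hold as stated: ``no long cheap crossing'' does not preclude cheap clusters of nontrivial size inside the box, so a short excursion of $\gamma$ into a non-defective box can consist entirely of cheap edges. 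One has to argue globally along $\gamma$ — for instance by noting that every maximal cheap segment lies in a single cheap cluster of $\ell^\infty$-diameter $<K$ (away from defective boxes), so the number of expensive edges is at least of order (net $\ell^\infty$-displacement)/$K$ — rather than box-by-box. These are the points at which your outline stops short of a proof.
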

For a proof of this lemma, see Lemma 5.5 in \cite{BK93}. As a consequence, we get
 \begin{equation}\label{A2}
    \lim_{t\to\infty}\inf_{S_t}\P(\mathcal{A}_1)=1\text{ and } \lim_{n\to\infty}\inf_{S_t}\min_{y\in S_t}\P(\text{$y$ is black})=1,
  \end{equation}
where $S_t$ runs over all subset of $t L$ satisfying \eqref{Sn-cond}. Moreover, we have the following.
\begin{lem}\label{sublinea-lem}
  \begin{equation}\label{A3}
    \lim_{t\to\infty}\inf_{S_t}\P(\mathcal{A}_2)=1.
\end{equation}
\end{lem}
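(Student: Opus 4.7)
The plan is to invoke a sublinear variance bound for the first passage time and then combine Chebyshev's inequality with a small union bound over $S_t$, which has only $[(\log t)^{1/8}]$ elements.

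Under the moment condition $\E[\tau_e^2(\log\tau_e)_+]<\infty$, the Benjamini--Kalai--Schramm / Bena\"{\i}m--Rossignol / Damron--Hanson--Sosoe style entropy argument yields a constant $C>0$ such that for every $v,w\in\Z^d$ with $|v-w|_1\ge 2$,
\begin{equation*}
\mathrm{Var}({\rm T}(v,w))\le C\,\frac{|v-w|_1}{\log |v-w|_1}.
\end{equation*}
I would apply this with $v=z\in\{0,2t\mathbf{x}_d\}$ and $w=y\in S_t$. Since $|y-t\mathbf{x}_d|\le t^{1/2}$ and $|\mathbf{x}_d|$ is of order $1$, we have $|z-y|_1\le C't$ for some $C'$ depending only on $d$, so the bound above yields $\mathrm{Var}({\rm T}(z,y))\le C''\,t/\log t$ uniformly over $z\in\{0,2t\mathbf{x}_d\}$ and $y\in S_t$ and over all admissible $S_t$.

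Chebyshev's inequality now gives, for every fixed $z$ and $y$,
\begin{equation*}
\P\bigl(|{\rm T}(z,y)-\E[{\rm T}(z,y)]|>t^{1/2}(\log t)^{-1/4}\bigr)
\le \frac{C''\,t/\log t}{t\,(\log t)^{-1/2}}
= \frac{C''}{(\log t)^{1/2}}.
\end{equation*}
A union bound over the $2\,\sharp S_t\le 2(\log t)^{1/8}$ pairs $(z,y)$ gives
\begin{equation*}
\P(\mathcal{A}_2^c)\le \frac{2C''}{(\log t)^{3/8}},
\end{equation*}
uniformly in the choice of $S_t$ satisfying \eqref{Sn-cond}, which tends to $0$ as $t\to\infty$. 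Taking infimum over such $S_t$ and then $t\to\infty$ completes the proof.

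The only nonroutine step is the sublinear variance bound itself, but this is precisely the reason the hypothesis $\E[\tau_e^2(\log\tau_e)_+]<\infty$ is imposed in the theorem; it is quoted directly from the existing literature, so the remaining computation is a one-line application of Chebyshev together with the fact that $\sharp S_t$ grows only polylogarithmically in $t$, which is what makes the union bound cheap.
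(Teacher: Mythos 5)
Your proof is correct and follows essentially the same route as the paper: quote the sublinear variance bound of Benjamini--Kalai--Schramm / Bena\"{\i}m--Rossignol / Damron--Hanson--Sosoe (which is exactly why the hypothesis $\E[\tau_e^2(\log\tau_e)_+]<\infty$ appears), apply Chebyshev to get a bound of order $(\log t)^{-1/2}$ per pair, and union-bound over the $O((\log t)^{1/8})$ pairs to obtain a decay of order $(\log t)^{-3/8}$. The only cosmetic difference is that the paper reduces to $\P(|{\rm T}(0,y)-\E[{\rm T}(0,y)]|\ge\cdots)$ via translation invariance before applying the variance bound, whereas you state the variance bound directly for ${\rm T}(v,w)$; this is the same content.
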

\begin{proof}
  We use the sublinear variance \cite{BKS03,BR08,DHS15}: under the assumption $\E [\tau_e^2 (\log{\tau_e})_+] <\infty$, there exists $C>0$ depending only on $F$ and $d$ such that for any $x\in \R^d$,
  \begin{equation}
    {\rm Var}({\rm T}(0,x))\leq C\frac{|x|}{\log{(1+|x|)}}.
  \end{equation}
  Then by the union bound and Chebyshev's inequality, we have
  \begin{equation}
    \begin{split}
      &\P(\exists y\in S_{n}\text{ such that }\max_{z=0,2t\mathbf{x}_d}\{|{\rm T}(z,y)-\E[{\rm T}(z,y)]|\}\ge t^{1/2}(\log{t})^{-1/4})\\
      &\leq 2\sharp S_{n}\sup_{y\in  {\rm B}(t\mathbf{x}_d,t^{1/2})} \P(|{\rm T}(0,y)-\E[{\rm T}(0,y)]|\ge t^{1/2}(\log{t})^{-1/4})\\
      &\leq C'(\log{t})^{1/8}(\log{t})^{-1/2} \to 0,
      \end{split}
  \end{equation}
  where $C'$ is a positive constant depending only on $d$ and $F$.
  \end{proof}
\begin{lem}\label{key:lem}
 If we take $C>0$ sufficiently large depending on $\delta$, then for any sufficiently large $t>1$ and $y\in S_t$, the following holds:
  \begin{equation}
\begin{split}
&\quad\P(\{{\rm T}(0,t\mathbf{x}_d,2t\mathbf{x}_d)-{\rm T}(0,y,2t\mathbf{x}_d)> K_t\}\cap \mathcal{A}_y)\\
  &\ge \P(\forall e\subset {\rm B}(y,C K_t),~\tau_e\le F^-+\delta/2)\P(\mathcal{A}\cap\{\text{$y$ {\rm is good}}\}). \label{key} 
  \end{split}
\end{equation}
  \end{lem}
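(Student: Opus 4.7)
My plan is a coupling argument exploiting the independence of edges inside and outside $B := {\rm B}(y, CK_t)$. On a probability space carrying an outside sampling $\omega_{B^c}$ and two independent copies $\omega_B^{(1)}, \omega_B^{(2)}$ of the inside edges, let Config $i$ denote $(\omega_{B^c}, \omega_B^{(i)})$ with passage time ${\rm T}^{(i)}$; each has the original law. Writing $G := \mathcal{A} \cap \{y \text{ is good}\}$ and $H := \{\tau_e \leq F^-+\delta/2 : e \subset B\}$, the event $\{\text{Config 1 in } G\} \cap \{\text{Config 2 in } H\}$ has joint probability $\P(G)\, \P(H)$, by independence of $\omega_B^{(1)}$ and $\omega_B^{(2)}$ (noting $H$ depends only on $\omega_B^{(2)}$). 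It then suffices to show that on this joint event, Config 2 lies in the event on the LHS of \eqref{key}.

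\emph{Saving through $y$.} In Config 1, the black property of $y$ forces the $\omega^{(1)}$-optimal path from $0$ to $y$ to enter $B$ at some $a^{(1)} \in \partial B$ with internal passage time at least $(F^-+\delta)\, CK_t$. In Config 2, the outer portion of this path (up to $a^{(1)}$) has the same passage time as in Config 1 (it uses only $\omega_{B^c}$-edges), while the internal portion can be replaced by a monotone lattice path of length $|a^{(1)}-y|_1 \geq CK_t$ whose $H$-cost is at most $(F^-+\delta/2)\,|a^{(1)}-y|_1$, saving $(\delta/2)\,CK_t$. Applying the same on the leg from $y$ to $2t\mathbf{x}_d$ and combining with the "good" inequality from Config 1 yields
\[
{\rm T}^{(2)}(0,y,2t\mathbf{x}_d) \leq {\rm T}^{(1)}(0,2t\mathbf{x}_d) + K_t - \delta\, CK_t.
\]

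\emph{Bounded loss for $z \neq y$.} For $z \in (S_t \setminus \{y\}) \cup \{t\mathbf{x}_d\}$, the spacing $|z - y|_1 \geq t^{1/2}(\log t)^{-1/8}$ is much larger than $CK_t$. The geometric claim (illustrated in the right panel of Figure~\ref{fig:NF}) is that
\[
{\rm T}^{(2)}(0, z, 2t\mathbf{x}_d) \geq {\rm T}^{(1)}(0, z, 2t\mathbf{x}_d) - O(CK_t).
\]
If the $\omega^{(2)}$-optimal path from $0$ to $z$ stays outside $B$, the two sides agree. Otherwise, if it enters $B$ at some $a$ and exits at some $b$, the outer portions (common to both configs) satisfy $\mathcal{A}_1$: ${\rm T}^{(1)}(0,a) + {\rm T}^{(1)}(b,z) \geq (F^-+\delta)(|a|_1 + |b-z|_1) \geq (F^-+\delta)(|z|_1 - 2CK_t)$; combining with $\mathcal{A}_2$ (extended by a small union bound to $\partial B$) and the triangle inequality for $g$ gives the claimed bound. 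Together with ${\rm T}^{(1)}(0, 2t\mathbf{x}_d) \leq {\rm T}^{(1)}(0, z, 2t\mathbf{x}_d)$ and the previous step, this yields
\[
{\rm T}^{(2)}(0, z, 2t\mathbf{x}_d) - {\rm T}^{(2)}(0, y, 2t\mathbf{x}_d) \geq \delta\, CK_t - K_t - O(CK_t),
\]
which for $C$ sufficiently large in $\delta$ exceeds $K_t$ when $z = t\mathbf{x}_d$ and is strictly positive for $z \in S_t\setminus\{y\}$, verifying both $\mathcal{A}_y$ and the $K_t$-gap in Config 2.

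The hard part is making the $O(CK_t)$ bound in the loss step tight: the implicit constant must be strictly smaller than $\delta C$ so that enlarging $C$ overcomes both the $K_t$ deficit and the loss. This requires a delicate interplay between $\mathcal{A}_1$ (to rule out a large shortcut via the cheap interior), $\mathcal{A}_2$ (to control FPT fluctuation near $\partial B$), and the geometric separation $|z-y|_1 \gg CK_t$, essentially showing that the $\omega^{(2)}$-optimal route from $0$ to $z$ either avoids $B$ altogether or costs comparably to the direct outside route.
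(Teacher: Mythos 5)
Your overall framework is the same as the paper's: a resampling/coupling on the ball $B = {\rm B}(y,CK_t)$ so that the RHS of \eqref{key} factors by independence, followed by the observation that after resampling the detour through $y$ becomes cheaper while the detours through other points of $S_t$ do not become competitive. Your Step 1 (saving through $y$) is essentially the paper's Step 1, modulo the usual first-entry/last-exit bookkeeping.

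The genuine gap is in your Step 2. Your ``geometric claim''
\begin{equation*}
{\rm T}^{(2)}(0, z, 2t\mathbf{x}_d) \geq {\rm T}^{(1)}(0, z, 2t\mathbf{x}_d) - O(CK_t)
\end{equation*}
is not true in general and is not established by what you write. If $\omega^{(1)}_B$ happens to be expensive, ${\rm T}^{(1)}(0,z)$ can exceed ${\rm T}^{(2)}(0,z)$ by any amount, since the resampling-induced saving across $B$ is governed by ${\rm T}^{(1)}(a,b)$ for $a,b$ near $\partial B$, and the edge weights outside the support infimum are unbounded above (no moment bound controls a single crossing of a box of side $O(CK_t)$ with probability $1$). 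The inequality you derive from $\mathcal{A}_1$, namely ${\rm T}^{(2)}(0,z) \ge (F^-+\delta)(|z|_1 - 2CK_t)$, is a lower bound of the wrong scale --- it sits at the $(F^-+\delta)|z|_1$ level, which is typically strictly below ${\rm T}^{(1)}(0,z)\approx {\rm g}(z)$ --- and neither $\mathcal{A}_2$ nor convexity of ${\rm g}$ upgrades it to an $O(CK_t)$-loss statement. Adding an extra high-probability event bounding ${\rm T}^{(1)}(a,b)$ inside $B$ would also break the independence needed to factor $\P(G)\P(H)$, since such an event depends on $\omega^{(1)}_B$.

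The paper sidesteps this entirely by never attempting to compare $\tilde{\rm T}(0,z,2t\mathbf{x}_d)$ with ${\rm T}(0,z,2t\mathbf{x}_d)$ quantitatively. It splits into two cases. If $\tilde{\rm T}(0,z,2t\mathbf{x}_d)\ge {\rm T}(0,z,2t\mathbf{x}_d)$, the goodness of $y$ together with Step 1 immediately gives the required gap, with no loss estimate at all. If $\tilde{\rm T}(0,z,2t\mathbf{x}_d) < {\rm T}(0,z,2t\mathbf{x}_d)$, the $\tilde\tau$-optimal path must touch $B$ at some $\gamma_i$, and the paper anchors \emph{both} $\tilde{\rm T}(0,y)$ and $\tilde{\rm T}(0,z)$ to the common point $\gamma_i$: $\tilde{\rm T}(0,y) \le \tilde{\rm T}(0,\gamma_i) + (F^-+\delta/2)CK_t$ by cheapness inside $B$, while $\tilde{\rm T}(0,z) \ge \tilde{\rm T}(0,\gamma_i) + (F^-+\delta)|\gamma_i-z|_1$ by $\mathcal{A}_1$; subtracting eliminates the uncontrolled quantity $\tilde{\rm T}(0,\gamma_i)$ and leaves a gap of order $t^{1/2}(\log t)^{-1/8}$, driven by the separation in $S_t$, which swamps both $K_t$ and the $\mathcal{A}_2$-fluctuation $t^{1/2}(\log t)^{-1/4}$. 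That common-anchor step is the idea your proposal is missing.
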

\begin{proof}
  We first explain the idea of the proof. We start with the event $\mathcal{A}\cap\{\text{$y$ is good}\}$. Then we resample all the configurations in ${\rm B}(y,CK_t)$ and consider the event that to each edge $e$ in ${\rm B}(y,CK_t)$, $\tau_e<F^-+\delta/2$ after resampling. Then it is easy to check that ${\rm T}(0,y,2t\mathbf{x}_d)$ decreases by at least $CK_t \delta/2$. On the other hand, since $y$ and $t\mathbf{x}_d$ are far away from each other, ${\rm T}(0,t\mathbf{x}_d,2t\mathbf{x}_d)$ is unchanged  or is much larger than ${\rm T}(0,y,2t\mathbf{x}_d)$ after resampling. Similarly, we have the same thing for $\{{\rm T}(0,z,2t\mathbf{x}_d)\}_{z\neq y}$. Thus we get $\{{\rm T}(0,t\mathbf{x}_d,2t\mathbf{x}_d)-{\rm T}(0,y,2t\mathbf{x}_d)> K_t\}\cap \mathcal{A}_y$ after resampling. To make the above heuristic rigorous, we use the resampling argument introduced in~\cite{BK93}.\\
  
 Let $\tau^*=\{\tau^*_e\}_{e\in \mathbb{E}^d}$ be an independent copy of $\{\tau_e\}_{e\in \mathbb{E}^d}$. We enlarge the probability space so that we can measure the event both for $\tau$ and $\tau^*$ and we still denote the joint probability measure by $\P$.   We define $\tilde{\tau}=\{\tilde{\tau}_e\}_{e\in \mathbb{E}^d}$ as $$\tilde{\tau}_e=\begin{cases}
    \tau_{e}^* & \text{if $e\subset {\rm B}(y,K_t)$}\\
  \tau_{e} & \text{otherwise.}
      \end{cases}$$
 We write $\tilde{\rm T}(a,b)$ for the first passage time from $a$ to $b$ with respect to $\tilde{\tau}$. We define $\tilde{\rm T}(a,y,b)$ similarly. Note that the distributions of $\tau$ and $\tilde{\tau}$ are the same under $\mathbb{P}$ since $\tau$ and $\tau^*$ are independent. Thus $\P(\mathcal{A}_y)=\P(\tilde{\mathcal{A}}_y)$, where $$\tilde{\mathcal{A}}_y=\{\forall z\in S_t\text{ with $z\neq y$},~\tilde{\rm T}(0,y,2t\mathbf{x}_{d})< \tilde{\rm T}(0,z,2t\mathbf{x}_{d})\}.$$
 Since the right hand side of \eqref{key} is equal to 
      \begin{equation}\label{key2}
        \P(\forall e\subset {\rm B}(y,CK_t),~\tilde{\tau}_e\le F^-+\delta/2,~\mathcal{A}\cap\{\text{$y$ is good}\})
      \end{equation}
      by independence of $\tau$ and $\tau^*$, it suffices to show that the event inside the probability in \eqref{key2} implies $\tilde{\mathcal{A}}_y$ and $\tilde{\rm T}(0,t\mathbf{x}_d,2t\mathbf{x}_d)-\tilde{\rm T}(0,y,2t\mathbf{x}_d)> K_t$. To do this, we suppose that $\tau$ and $\tilde{\tau}$ belong to the event in \eqref{key2}.\\

      \noindent \underline{Step 1} ($\tilde{\rm T}(0,y,2t\mathbf{x}_d)+ 2K_t<{\rm T}(0,y,2t\mathbf{x}_d)$)\\
      We take an arbitrary optimal path $\gamma=(\gamma_i)^l_{i=1}\subset \Z^d$ for ${\rm T}(0,y,2t\mathbf{x}_d)$. Let
      $$s=\min \{i\in\{1,\cdots,l\}|~\gamma_i\in {\rm B}(y,K_t)\}\text{ and }f=\max \{i\in\{1,\cdots,l\}|~\gamma_i\in {\rm B}(y,CK_t)\}.$$
      Since $\forall e\subset {\rm B}(y,CK_t),~\tilde{\tau}_e\le F^-+\delta/2$, we have
      $$\tilde{\rm T}(0,y,2t\mathbf{x}_d)\leq {\rm T}(0,\gamma_s)+{\rm T}(\gamma_f,2t\mathbf{x}_{d})+|\gamma_f-\gamma_s|_1(F^-+\delta/2).$$
      On the other hand, since $y$ is black and $\gamma$ passes through $[y]$, we have
      $${\rm T}(0,y,2t\mathbf{x}_d)\geq {\rm T}(0,\gamma_s)+{\rm T}(\gamma_f,2t\mathbf{x}_{d})+(|\gamma_f-\gamma_s|_1\lor |[y]-\gamma_s|_1)(F^-+\delta)$$
      Since $|\gamma_s-[y]|_1\ge CK_t-1$ and $C$ is sufficiently large depending on $\delta$,
      we have
      $$\tilde{\rm T}(0,y,2t\mathbf{x}_d)+ 2K_t<{\rm T}(0,y,2t\mathbf{x}_d).$$
      \underline{Step 2} ($\tilde{\rm T}(0,y,2t\mathbf{x}_d)+K_t< \tilde{\rm T}(0,z,2t\mathbf{x}_d)$ for any $z\in S_t\text{ with $z\neq y$}$ or $z=[t\mathbf{x}_d]$)\\
      Let $z\in S_t$ with $z\neq y$ or $z=[t\mathbf{x}_d]$. We first suppose that $\tilde{\rm T}(0,z,2t\mathbf{x}_d)< {\rm T}(0,z,2t\mathbf{x}_d)$. Then, since we resample the configurations only in ${\rm B}(y,CK_t)$, any optimal path $\gamma=(\gamma_i)^l_{i=1}$ for $\tilde{\rm T}(0,z,2t\mathbf{x}_d)$ must touch with ${\rm B}(y,CK_t)$, i.e., there exists $i\in\{1,\cdots,l\}$ such that $\gamma_i\in {\rm B}(y,CK_t)$.  By definition, $[z]$ is included in $\gamma$ and let $j\in\{1,\cdots,l\}$ be $\gamma_j=[z]$. We only consider the case $i<j$. In fact, for the other case, replacing \eqref{control-expect2} with \eqref{control-expect}, the same proof works. Then, by using the condition $\mathcal{A}_1$ and $\forall e\subset {\rm B}(y,CK_t),~\tilde{\tau}_e\le F^-+\delta/2$, respectively, we get
      \begin{align*}
        \tilde{\rm T}(0,y)&\leq \tilde{\rm T}(0,\gamma_i)+(F^-+\delta/2)CK_t,\\
        \tilde{\rm T}(0,z)&\geq \tilde{\rm T}(0,\gamma_i)+(F^-+\delta)|\gamma_i-[z]|_1,
        \end{align*}
      But, by $|y-z|\geq t^{1/2}(\log{t})^{-1/8}$,
      $$|\gamma_i-[z]|_1\geq t^{1/2}(\log{t})^{-1/8}-CK_t.$$
      Thus, 
      \begin{equation}\label{minus}
        \tilde{\rm T}(0,z)\geq \tilde{\rm T}(0,y)+\frac{1}{2}\left(F^-+\delta\right)t^{1/2}(\log{t})^{-1/8}.
      \end{equation}
      If there exists $i'>j$ such that $\gamma_{i'}\in {\rm B}(y,CK_t)$, as in \eqref{minus}, we have
      \begin{equation}
        \begin{split}
         \tilde{\rm T}(z,2t\mathbf{x}_d)\geq  \tilde{\rm T}(y,2t\mathbf{x}_d)+\frac{1}{2}\left(F^-+\delta\right)t^{1/2}(\log{t})^{-1/8}.
        \end{split}
      \end{equation}
      Otherwise, since we change the configurations only in ${\rm B}(y,CK_t)$, we have $\tilde{\rm T}(z,2t\mathbf{x}_d)\geq {\rm T}(z,2t\mathbf{x}_d)$. On the other hand, the essentially same argument as in Step 1 shows $\tilde{\rm T}(y,2t\mathbf{x}_d)+K_t< {\rm T}(y,2t\mathbf{x}_d)$. Combining with the condition $\mathcal{A}_2$ and \eqref{control-expect2} yields
      \begin{equation}
        \begin{split}
          &\quad \tilde{\rm T}(y,2t\mathbf{x}_d)- \tilde{\rm T}(z,2t\mathbf{x}_d)\\
          &\leq |{\rm T}(y,2t\mathbf{x}_d)-\E {\rm T}(y,2t\mathbf{x}_d)|+|\E {\rm T}(z,2t\mathbf{x}_d)-\E {\rm T}(y,2t\mathbf{x}_d)|+|\E {\rm T}(z,2t\mathbf{x}_d)-{\rm T}(z,2t\mathbf{x}_d)|\\
          &\leq 3t^{1/2}(\log{t})^{-1/4}.
        \end{split}
        \end{equation}
      In any case, together with \eqref{minus}, this gives
      \begin{align*}
        \tilde{\rm T}(0,y,2t\mathbf{x}_d)&\leq\tilde{\rm T}(0,z,2t\mathbf{x}_d)-\frac{1}{2}\left(F^-+\delta\right)t^{1/2}(\log{t})^{-1/8}+3t^{1/2}(\log{t})^{-1/4}\\
        &<\tilde{\rm T}(0,z,2t\mathbf{x}_d)-K_t.\end{align*}
      We now turn to the case $\tilde{\rm T}(0,z,2t\mathbf{x}_d)\geq {\rm T}(0,z,2t\mathbf{x}_d)$. Then, since  $y$ is good,
      $${\rm T}(0,y,2t\mathbf{x}_d)-K_t<{\rm T}(0,2t\mathbf{x}_d)\leq {\rm T}(0,z,2t\mathbf{x}_d),$$
      and thus Step 1 implies $\tilde{\rm T}(0,y,2t\mathbf{x}_d)+K_t<\tilde{\rm T}(0,z,2t\mathbf{x}_d)$. Thus the proof is completed.
  \end{proof}

\begin{proof}[Proof of Proposition~\ref{estimate2}]
  Since $\sharp\{e\subset {\rm B}(y,C K_t)\}\le 2d(2CK_t)^d$, by \eqref{A2} and \eqref{A3}, we will compute \eqref{key} as 
\begin{equation}
\begin{split}
&\quad \P(\forall e\subset {\rm B}(y,C K_t),~\tau_e\le F^-+\delta/2)\P(\mathcal{A}\cap\{\text{$y$ is good}\})\\
&\ge \P(\tau_e\le F^-+\delta/2)^{2d(2CK_t)^d}(\P(\{\text{$y$ is good}\})-\P(\mathcal{A}^c))\\
&\ge \exp\{(-MK_t^d)\}(\P({\rm T}(0,y, 2t \mathbf{x}_{d}) -{\rm T}(0, 2t \mathbf{x}_{d})< K_t)-1/4), \label{formula 3}
\end{split}
\end{equation}
with some constant $M>0$ independent of $t$. By using the first-moment method and $\E {\rm T}(0, 2t \mathbf{x}_{d})\geq  2{\rm g}(t\mathbf{x}_d)-4d\E \tau_e$, we get
\begin{equation}\begin{split}
\P({\rm T}(0,y, 2t \mathbf{x}_{d}) -{\rm T}(0, 2t \mathbf{x}_{d})< K_t)&\geq 1-K_t^{-1}\E[{\rm T}(0,y, 2t \mathbf{x}_{d}) -{\rm T}(0, 2t \mathbf{x}_{d})]\\
&\geq 1-K_t^{-1}(\E{\rm T}(0,y, 2t \mathbf{x}_{d}) -2{\rm g}(t\mathbf{x}_d)+4d\E\tau_e). \label{formula 2}
\end{split}\end{equation}
Therefore, the proof of Proposition~\ref{estimate2} is completed.
\end{proof}

\section{Proof of Theorem~\ref{lem-positive-NF}}
The following proof is almost independent of the previous arguments. Since $\E {\rm T}(-x,x)\geq {\rm g}(2x)$ for any $x\in\Z^d$,
\begin{equation}\label{Important-obs}
  2(\E {\rm T}(0,x)-{\rm g}(x))\geq  \E[{\rm T}(-x,0,x)-{\rm T}(-x,x)].
\end{equation}
Therefore, it remains to find an event independent of $x$, on which ${\rm T}(-x,0,x)-{\rm T}(-x,0,x)$ is uniformly bounded away from $0$. Let $F^+$ be the supremum of the support of the distribution $F$.\\

      \begin{figure}[b]
  \includegraphics[width=8.0cm]{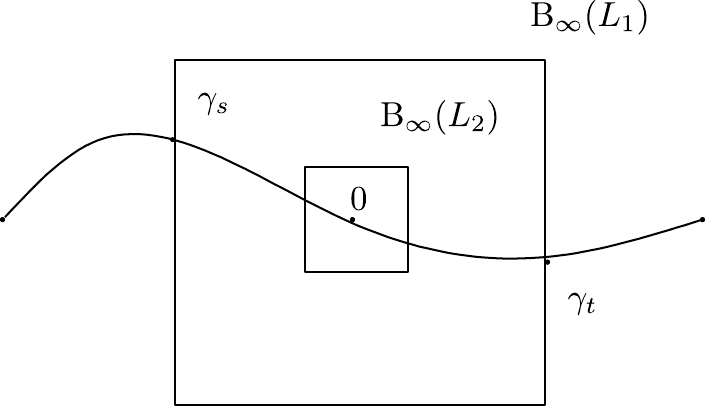}  \includegraphics[width=6.0cm]{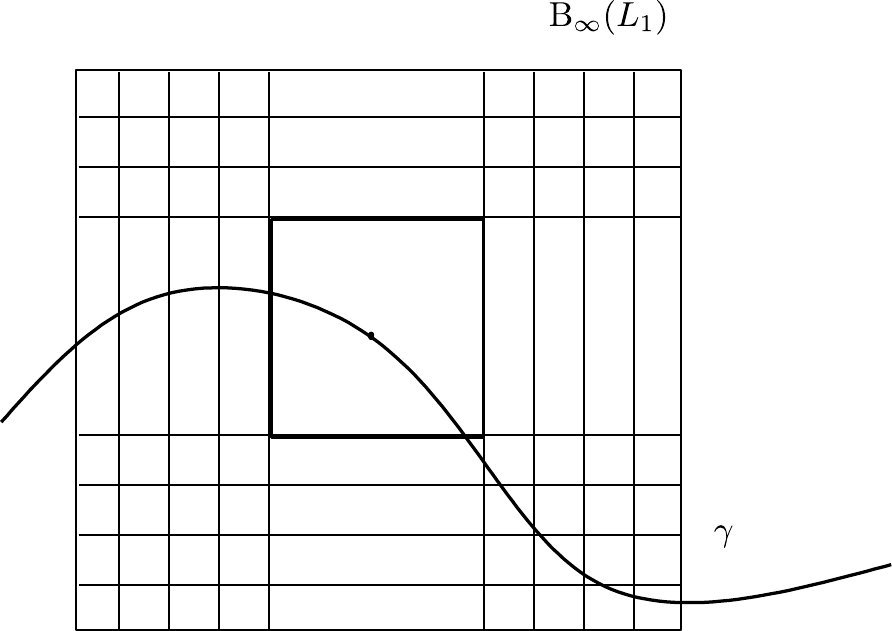}
  \caption{}
  \label{fig:NF2}
Left:  Figure of ${\rm B}_{\infty}(L_1)$ and ${\rm B}_{\infty}(L_2)$.\\
Right: Black lines represent $\tilde{\mathbb{L}}$.
      \end{figure}
      
We first consider the case $F^+=\infty$, which is rather easy. Let us define an event $A$ as
$$A=\{\forall e\in \mathbb{E}^d\text{ with }e\subset \overline{\partial} {\rm B}_{\infty}(1),~\tau_e\leq F^-+1\}\cap \{\forall e\in \mathbb{E}^d\text{ with }0\in e,~\tau_e\geq 2(F^-+2)\}.$$
      Then, on the event $A$, we have
      $${\rm T}(-x,0,x)-{\rm T}(-x,x)\geq 4(F^-+2)-4(F^-+1)\geq 4.$$
      Thus, \eqref{Important-obs} is further bounded from below by $4\P(A)$, which is uniformly bounded away from $0$.\\

      Next, we consider the general case. We take $\alpha\in(F^-,F^+ )$ arbitrary. Let $L_1>L_2\in\N$ be such that
      $$L_1\geq \left(\frac{F^-+1}{\alpha-F^-}\right) (L_2+1) \text{ and }L_2\geq \frac{1}{\alpha-F^-}.$$ Given $a\in  \overline{\partial} {\rm B}_{\infty}(L_1)$ and $i\in\{1,\cdots,d\}$, we define (see also Figure~\ref{fig:NF2})
      \begin{align*}
      {\rm L}_i(a)&=\{a+n\mathbf{e}_i|~n\in\Z\},\\
      \tilde{{\rm L}}_i(a)&=\{e\in \mathbb{E}^d|~e\subset {\rm L}_i(a)\cap {\rm B}_{\infty}(L_1) \},\\
      \tilde{\mathbb{L}}&=\underset{{\rm L}_i(a)\cap {\rm B}_{\infty}(L_2-1)=\emptyset}{\bigcup_{a\in  \overline{\partial} {\rm B}_{\infty}(L_1),~i\in\{1,\cdots,d\}}}\tilde{{\rm L}}_i(a).
      \end{align*}
      For the proof, we use the following proposition.
      \begin{prop}
       We take $\e=1/(16d L_1)$. Let us define an event $A$ as
$$A=\{\forall e\in \tilde{\mathbb{L}},~\tau_e\leq F^-+\e\}\cap \{\forall e\in \mathbb{E}^d \backslash \tilde{\mathbb{L}}\text{ with }e\subset {\rm B}_{\infty}(L_1),~\tau_e\geq \alpha\}.$$
        Then, on the event $A$, for any $x\in\Z^d\backslash {\rm B}_{\infty}(L_1)$,
      $${\rm T}(-x,0,x)-{\rm T}(-x,x)\geq 1.$$
      \end{prop}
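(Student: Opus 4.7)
My plan is a rerouting argument: for any path $\gamma$ from $-x$ through $0$ to $x$, I construct a competitor path $\tilde\gamma$ from $-x$ to $x$ that coincides with $\gamma$ outside ${\rm B}_{\infty}(L_1)$ but bypasses ${\rm B}_{\infty}(L_2-1)$ via a cheap $\tilde{\mathbb{L}}$-shell route. I then show ${\rm T}(\gamma) - {\rm T}(\tilde\gamma) \geq 1$ on event $A$, and taking the infimum over $\gamma$ yields the claim.

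First I would let $a, b \in \partial {\rm B}_{\infty}(L_1)$ be the first entry and last exit of $\gamma$ on the outer box; then $\gamma[a,b]$ is the portion inside. Two geometric facts drive everything: (i) every edge with at least one endpoint in ${\rm B}_{\infty}(L_2-1)$ lies outside $\tilde{\mathbb{L}}$, because the axis-parallel lines defining $\tilde{\mathbb{L}}$ are required to miss the inner box, so on $A$ the subpath $\gamma[a,b]$ must contain at least $2L_2$ edges of weight $\ge \alpha$ (to enter, reach $0$, and leave the inner box); (ii) any two adjacent vertices of $\partial {\rm B}_{\infty}(L_1)$ share a coordinate of absolute value $L_1 \ge L_2$, so every such boundary edge is in $\tilde{\mathbb{L}}$, which lets me build a shell path $\sigma$ from $a$ to $b$ inside ${\rm B}_{\infty}(L_1)\setminus {\rm B}_{\infty}(L_2-1)$ using only $\tilde{\mathbb{L}}$-edges and of length at most $2L_1 + 2L_2$ (the worst case being when $a$ and $b$ are antipodally placed on a single axis, forcing a $2L_2$-detour around the inner box). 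Setting $\tilde\gamma = \gamma[-x,a]\cup\sigma\cup\gamma[b,x]$ gives ${\rm T}(-x,x) \le {\rm T}(\tilde\gamma)$.

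The problem reduces to showing ${\rm T}(\gamma[a,b]) - {\rm T}(\sigma) \geq 1$. I would argue from the bound $\sum_e\tau_e \ge F^- |\gamma[a,b]| + (\alpha-F^-)N_2$ (where $N_2$ is the number of non-$\tilde{\mathbb{L}}$ edges in $\gamma[a,b]$) that
\[
{\rm T}(\gamma[a,b]) \ \ge\ \min\bigl(2L_1\alpha,\ 2L_1 F^- + 2L_2\alpha\bigr),
\]
the two terms corresponding to the extremal regimes: either $\gamma[a,b]$ is essentially the $\ell_1$-geodesic through $0$ (length $\approx 2L_1$, with all edges in the non-$\tilde{\mathbb{L}}$ ``tube''), or it takes a cheap shell detour and only dips into the inner box (length $\gtrsim 2L_1 + 2L_2$, with just $\sim 2L_2$ expensive edges). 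Combined with ${\rm T}(\sigma) \le (2L_1+2L_2)(F^-+\varepsilon)$, the hypotheses $L_1(\alpha-F^-) \ge (F^-+1)(L_2+1)$ and $L_2(\alpha-F^-) \ge 1$ imply directly that both candidate differences are at least $2 - O(1/d)$, and the choice $\varepsilon = 1/(16dL_1)$ makes the aggregate $\varepsilon$-error at most $1/(4d)$, so the bound exceeds $1$.

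The main obstacle I foresee is the antipodal geometry of $(a,b)$, where $|\sigma|$ exceeds $|a|_1+|b|_1 \le |\gamma[a,b]|$ by as much as $2L_2$ and a naive length comparison loses a term of size $2L_2 F^-$; this is exactly what the quantitative hypothesis on $L_1$ compensates, since in the direct-route regime the excess expensive-edge cost $2L_1(\alpha - F^-) \ge 2(F^-+1)(L_2+1)$ absorbs the loss plus a clean $2L_2+2$ of surplus, while in the shell-plus-dip regime the $F^-$ contributions simply cancel and the $2L_2(\alpha-F^-)\ge 2$ gain from the unavoidable inner-box edges does the job directly.
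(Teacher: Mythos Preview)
Your rerouting strategy is the right idea and matches the paper's approach in spirit, but the execution has a genuine gap in two coupled places.

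First, the uniform bound $|\sigma|\le 2L_1+2L_2$ is false. If $a=(L_1,\dots,L_1)$ and $b=(-L_1,\dots,-L_1)$ then any path from $a$ to $b$ has length at least $|a-b|_1=2dL_1$, so $|\sigma|\ge 2dL_1$. The correct bound (which the paper uses) is $|\sigma|\le |a-b|_1+2(L_2+1)$; the $2L_2$ is only the \emph{excess} over $|a-b|_1$, not the total.

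Second, and more seriously, your lower bound ${\rm T}(\gamma[a,b])\ge\min(2L_1\alpha,\,2L_1F^-+2L_2\alpha)$ is not what follows from the information you cite. From $|\gamma[a,b]|\ge 2L_1$ and $N_2\ge 2L_2$ you only get $F^-\cdot 2L_1+(\alpha-F^-)\cdot 2L_2=2(L_1-L_2)F^-+2L_2\alpha$, which is strictly below your second minimand whenever $F^->0$. Your ``extremal regimes'' sentence asserts a dichotomy (either $N_2\approx 2L_1$, or $|\gamma[a,b]|\ge |\sigma|$) without proof, and this is precisely where the real work lies. Concretely: when $a,b$ are antipodal on an axis and $|\sigma|$ exceeds $|\gamma[a,b]|$ by about $2L_2$, you lose $2L_2F^-$ in the length comparison; with only $N_2\ge 2L_2$ you recover $2L_2(\alpha-F^-)\ge 2$, which does \emph{not} beat $2L_2F^-$ for large $F^-$.

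The paper resolves this with a clean dichotomy you are missing: define the corner region ${\rm B}_\infty(L_1,L_2)=\{v\in{\rm B}_\infty(L_1):\exists i\ne j,\ |v_i|,|v_j|\ge L_2\}$. If $\gamma_{s,t}$ avoids it (Case~1), one proves $N_2\ge 2L_1$ (every edge that decreases $|v|_\infty$ is expensive), and then $2L_1(\alpha-F^-)\ge 2(F^-+1)(L_2+1)$ absorbs the deficit. If $\gamma_{s,t}$ hits a corner point $\gamma_i$ (Case~2), the paper reroutes from $\gamma_i$ to $\gamma_t$ rather than from $a$ to $b$: from any corner point there is an $\ell_1$-geodesic in $\tilde{\mathbb{L}}$, so $|\sigma|=|\gamma_i-\gamma_t|_1\le|\gamma_{i,t}|$, the deficit vanishes, and $N_2\ge 2L_2$ suffices. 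Your single rerouting from $a$ to $b$ cannot achieve this second reduction.
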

      \begin{proof}
    We take any optimal path $\gamma=(\gamma_i)^{\ell}_{i=1}$ for ${\rm T}(-x,0,x)$ and define 
        $$s=\min\{i\in\{1,\cdots,\ell\}|~\gamma_i\in {\rm B}_{\infty}(L_1)\},~
        t=\max\{i\in \{1,\cdots,\ell\}|~\gamma_i\in {\rm B}_{\infty}(L_1)\}.$$
        We write $\gamma_{s,t}=(\gamma_i)^t_{i=s}$ and take $r\in\{s,\cdots,t\}$ such that $\gamma_r=0$.  Given $\ell,m\in\N$ verifying $m<\ell$, we define
        $$ {\rm B}_{\infty}(\ell,m)=\{x\in {\rm B}_{\infty}(\ell)|~\exists i\neq j\in\{1,\cdots,d\}~\text{s.t.}~|x_i|,|x_j|\geq m\}.$$

        (Case 1) First we suppose $\gamma_{s,t}\cap {\rm B}_{\infty}(L_1,L_2)=\emptyset$. Note that the graph distance between $\gamma_s$ and $\gamma_t$ in $\tilde{\mathbb{L}}$ is less than or equal to $|\gamma_s-\gamma_t|_1+2(L_2+1)$, which implies
        $${\rm T}(\gamma_s,\gamma_t)\leq (|\gamma_s-\gamma_t|_1+2(L_2+1))(F^-+\e)\leq (|\gamma_s-\gamma_t|_1+2(L_2+1))F^-+1.$$
        If $\gamma_{s,t}\cap {\rm B}_{\infty}(L_1,L_2)=\emptyset$, since $\gamma_{s,t}$ contains at least $2L_1$ edges of weights greater than $\alpha$, then ${\rm T}(\gamma_{s,t})\geq 2L_1\alpha+(|\gamma_s-\gamma_t|_1-2L_1) F^-$. Thus
        \begin{align*}
          {\rm T}(-x,0,x)&\geq {\rm T}(-x,\gamma_s)+2L_1\alpha+ (|\gamma_s-\gamma_t|_1-2L_1) F^-+{\rm T}(\gamma_t,x)\\
          &= {\rm T}(-x,\gamma_s)+2L_1(\alpha-F^-)+ |\gamma_s-\gamma_t|_1 F^-+{\rm T}(\gamma_t,x)\\
          &\geq {\rm T}(-x,\gamma_s)+2(L_2+1)F^-+2+ |\gamma_s-\gamma_t|_1 F^- +{\rm T}(\gamma_t,x)\\
          &\geq {\rm T}(-x,x)+1,
        \end{align*}
        where we have used $L_1\geq \left(\frac{F^-+1}{\alpha-F^-}\right)(L_2+1)$ in the third line.\\
        
        (Case 2) We suppose $\gamma_{s,t}\cap {\rm B}_{\infty}(L_1,L_2)\neq \emptyset$. By construction of ${\rm B}_{\infty}(L_1,L_2)$, it is straightforward to check that for any $y\in {\rm B}_{\infty}(L_1,L_2)$ and $z\in {\rm B}_{\infty}(L_1)$, the graph distance between $y$ and $z$ in $\tilde{\mathbb{L}}$ is $|y-z|_1$. In particular, we obtain ${\rm T}(y,z)\leq (F^-+\e)|y-z|_1.$ If $\gamma_{s,t}\cap {\rm B}_{\infty}(L_1,L_2)\neq \emptyset$, then we can take $i\in\{s,\cdots,t\}$ such that $\gamma_i\in {\rm B}_{\infty}(L_1,L_2)$. Without loss of generality, we can suppose $i< r$, since the other case can be treated in the same way. Since ${\rm T}(\gamma_i,\gamma_t)\leq |\gamma_i-\gamma_t|_1(F^-+\e)\leq |\gamma_i-\gamma_t|_1F^-+1$ and $\gamma_{i,t}=(\gamma_j)^t_{j=i}$ contains at least $2L_2$ edges of weights greater than $\alpha$,
        \begin{align*}
          {\rm T}(-x,0,x)&= {\rm T}(-x,\gamma_i)+{\rm T}(\gamma_{i},0,\gamma_t)+{\rm T}(\gamma_t,x)\\
          &\geq {\rm T}(-x,\gamma_i)+(|\gamma_i-\gamma_t|_1-2L_2)F^-+2L_2\alpha +{\rm T}(\gamma_t,x)\\
          &=   {\rm T}(-x,\gamma_i)+(|\gamma_i-\gamma_t|_1+2L_2(\alpha-F^-) +{\rm T}(\gamma_t,x)\\
          &\geq {\rm T}(-x,\gamma_i)+{\rm T}(\gamma_i,\gamma_t)+1+{\rm T}(\gamma_t,x) \geq {\rm T}(-x,x)+1,
        \end{align*}
        where we have used $L_2\geq (\alpha-F^-)^{-1}$ in the last line.
      \end{proof}
      If $x\notin {\rm B}_{\infty}(L_1)$, then
      \begin{align*}
        2(\E[{\rm T}(0,x)]-{\rm g}(x))&\geq \E[{\rm T}(-x,0,x)-{\rm T}(-x,x)]\\
        &\geq \P(A).
      \end{align*}
      Otherwise, if $x\in {\rm B}_{\infty}(L_1)$ with $x\neq 0$, then since $2L_1 x\notin {\rm B}_{\infty}(L_1)$, by using the sub-additivity of the first passage time, we get
      \begin{align*}
        2L_1(\E[{\rm T}(0,x)]-{\rm g}(x))&\geq \E[{\rm T}(0,2L_1 x)-{\rm g}(2L_1 x)]\\
        &\geq \frac{1}{2}\P(A).
      \end{align*}
      Thus, the proof is completed.


\section*{Acknowledgements}
The author would like to express his gratitude to Michael Damron, Syota Esaki and Ryoki Fukushima for helpful discussions and comments. Especially, thanks to the discussion with Ryoki Fukushima, the proof becomes simpler. He also thanks the anonymous referee for reading the paper carefully and providing thoughtful
comments. This research is partially supported by JSPS KAKENHI 16J04042.


\bibliographystyle{amsplain}




\end{document}